\newtheorem{theorem}{Theorem}[section]
\newtheorem{corollary}[theorem]{Corollary}
\newtheorem{lemma}[theorem]{Lemma}
\newtheorem{proposition}[theorem]{Proposition}
\newtheorem*{xque}{Question}
\newtheorem*{xTC}{Teissier's Criterion}
\newtheorem{mainthm}[theorem]{Main Theorem}
\theoremstyle{definition}
\newtheorem{remark}[theorem]{Remark}
\newtheorem{example}[theorem]{Example}
\numberwithin{equation}{section}
\newcommand{\Q}{\mathbb{Q}}
\newcommand{\C}{\mathbb{C}}
\newcommand{\F}{\mathbb{F}}
\newcommand{\A}{\mathcal{A}}
\newcommand{\Car}{\mathcal{C}}
\newcommand{\proj}{\mathbb{P}}
\DeclareMathOperator{\sing}{sing \, }
\DeclareMathOperator{\reg}{reg\,}
\DeclareMathOperator{\grad}{grad{}}
\DeclareMathOperator{\mult}{mult \,}
\DeclareMathOperator{\frakX}{\mathfrak {X}}
\begin{document}


\baselineskip=17pt


\title[{On Teissier's example}]{On Teissier's example of an equisingularity class that cannot be defined over the rationals}

\author[A. Parusi\'nski]{Adam Parusi\'nski}
\address {Universit\'e C\^ote d'Azur,  CNRS,  LJAD, UMR 7351, 06108 Nice, France}
\email{adam.parusinski@univ-cotedazur.fr}

\author[L. P\u aunescu]{Lauren\c tiu P\u aunescu}
\address{School of Mathematics and Statistics, The University of Sydney,
  Sydney, NSW, 2006, Australia }%
\email{laurentiu.paunescu@sydney.edu.au}%

\date{}
\begin{abstract}
A result of Teissier says that the cone over one of classical polygon examples in the real projective space gives, by complexification, a surface singularity 
which is not Whitney equisingular to a singularity defined over the field $\Q$ of rational numbers.  In this note we correct the example and give a complete proof {of Tesissier's result.}

\end{abstract}

\keywords {
surface singularities, Whitney stratification, deformation to the normal cone, exceptional tangents}
\subjclass[2010]{Primary
32S25, 
Secondary 
32S15,  
14B05. 
}

\maketitle

\section{Introduction}
In the Comptes Rendus note \cite{Te90}, Bernard Teissier proposed an example of a surface singularity in $\C^3$ that is not equisingular, in the sense of Whitney, to a surface singularity in $\C^3$ defined by an analytic function with rational coefficients.  It is known by \cite{Ro18} that 
every complex singularity is equisingular, in a way even stronger than Whitney's, to a singularity defined by polynomial equations with coefficients in the field $\overline \Q$ of algebraic numbers.  Whether it is possible to deform it to a singularity defined over $\Q$ by a topologically equisingular deformation is still an open problem.

Teissier's example, taken from the book of Grünbaum \cite{Grunbook}, is one of the classical examples in convex geometry.  As B. Teissier let us know, this particular example admits a polynomial equation with rational coefficients and it has to be slightly corrected. This is explained in Section \ref{sec:grunbaum}. 

There is another slight problem in the argument of \cite{Te90}.  The above mentioned  property of the example is proven with the use of a theorem of \cite{LT79} on the deformation of a surface to its tangent cone.  That theorem is incorrect as stated.  This is explained in Section \ref{sec:deformation}.  An alternative proof of the result of \cite{Te90} is given in Section \ref{sec:proof}. 

\medskip
\noindent
\textbf{Notation.} If $\varphi$ and $\psi$ are non-negative real valued functions and we write $\varphi\le C \psi$ we mean that there is a real constant $C$ for which this inequality holds.  
This constant may occasionally change, but, for simplicity, it will still be denoted by $C.$
Sometimes we even abbreviate it to $\varphi\lesssim \psi$ meaning $\varphi\le C \psi$ for a constant $C$.

\section{Grünbaum's example}\label{sec:grunbaum}
On page 34 of Grünbaum's paper \cite{Grunpaper} (see also \cite[pp. 93--94]{Grunbook}), one finds the following two arrangements of 9 lines in the real projective space, built in an obvious way on the regular pentagon. 

\begin{multicols}{2}
\bigskip
\tikzset{%
    add/.style args={#1 and #2}{
        to path={%
 ($(\tikztostart)!-#1!(\tikztotarget)$)--($(\tikztotarget)!-#2!(\tikztostart)$)%
  \tikztonodes},add/.default={.2 and .2}}}


 \begin{tikzpicture}
\coordinate (I) at (0,0);
\coordinate (H) at ({cos(342)},{sin(342)});
\coordinate (F) at ({cos(54)},{sin(54)});
\coordinate (E) at ({cos(126)},{sin(126)});
\coordinate (G) at ({cos(198)},{sin(198)});
\coordinate (J) at (0,-1);

\draw[name path=EF, add=2 and 2] (E) to (F);
\draw[name path=FH, add=2 and 2] (F) to (H);
\draw[name path=EG, add=2 and 2] (E) to (G);
\draw[name path=EI, add=1 and 3] (E) to (I);
\draw[name path=FI, add=1 and 3] (F) to (I);
\draw[name path=HI, add=1 and 3] (H) to (I);
\draw[name path=GI, add=1 and 3] (G) to (I);

\path [name intersections={of=EF and GI,by=B}];
\path [name intersections={of=EF and HI,by=A}];
\path [name intersections={of=FI and EG,by=C}];
\path [name intersections={of=EI and FH,by=D}];
\path [name intersections={of=EG and FH,by=K}];

\draw[name path=BH, add=.25 and 2] (B) to (H);
\draw[name path=AG, add=2 and .25] (G) to (A);

\draw[fill=white] (I) circle (0.05cm) node[above] {$I$};
\draw[fill=white] (H) circle (0.05cm) node[right,xshift=1.5,yshift=1.5] {$H$};
\draw[fill=white] (F) circle (0.05cm) node[above left] {$F$};
\draw[fill=white] (E) circle (0.05cm) node[above right] {$E$};
\draw[fill=white] (G) circle (0.05cm) node[left,xshift=-1.5,yshift=1] {$G$};
\draw[fill=white] (B) circle (0.05cm) node[above] {$B$};
\draw[fill=white] (A) circle (0.05cm) node[above] {$A$};
\draw[fill=white] (C) circle (0.05cm) node[above,xshift=-3] {$C$};
\draw[fill=white] (D) circle (0.05cm) node[above,xshift=3] {$D$};
\draw[fill=white] (J) circle (0.05cm) node[above] {$J$};
\draw[fill=white] (K) circle (0.05cm) node[right,xshift=3] {$K$};
\end{tikzpicture}\\ \\ 
 \hglue 2truecm Arrangement $\mathcal C$

\begin{tikzpicture}
\coordinate (I) at (0,0);
\coordinate (C) at ({cos(342)},{sin(342)});
\coordinate (B) at ({cos(54)},{sin(54)});
\coordinate (A) at ({cos(126)},{sin(126)});
\coordinate (D) at ({cos(198)},{sin(198)});
\coordinate (K) at (0,-1);

\draw[name path=AB, add=2 and 2] (A) to (B);
\draw[name path=BC, add=2 and 2] (B) to (C);
\draw[name path=AD, add=2 and 2] (A) to (D);
\draw[name path=AI, add=1 and 3] (A) to (I);
\draw[name path=BI, add=1 and 3] (B) to (I);
\draw[name path=CI, add=1 and 3] (C) to (I);
\draw[name path=DI, add=1 and 3] (D) to (I);

\path [name intersections={of=AB and DI,by=E}];
\path [name intersections={of=AB and CI,by=F}];
\path [name intersections={of=BI and AD,by=G}];
\path [name intersections={of=AI and BC,by=H}];
\path [name intersections={of=AD and BC,by=J}];

\draw[name path=EC, add=.25 and 2] (E) to (C);
\draw[name path=FD, add=2 and .25] (D) to (F);

\draw[fill=white] (I) circle (0.05cm) node[above] {$I$};
\draw[fill=white] (C) circle (0.05cm) node[right,xshift=1.5,yshift=1.5] {$C$};
\draw[fill=white] (B) circle (0.05cm) node[above left] {$B$};
\draw[fill=white] (A) circle (0.05cm) node[above right] {$A$};
\draw[fill=white] (D) circle (0.05cm) node[left,xshift=-1.5,yshift=1] {$D$};
\draw[fill=white] (E) circle (0.05cm) node[above] {$E$};
\draw[fill=white] (F) circle (0.05cm) node[above] {$F$};
\draw[fill=white] (G) circle (0.05cm) node[above,xshift=-3] {$G$};
\draw[fill=white] (H) circle (0.05cm) node[above,xshift=3] {$H$};
\draw[fill=white] (K) circle (0.05cm) node[above] {$K$};
\draw[fill=white] (J) circle (0.05cm) node[right,xshift=3] {$J$};
\end{tikzpicture}\\
\\
 \hglue 2truecm Arrangement {$\mathcal C' $}
\end{multicols}

As Grünbaum shows in \cite[Theorem 2.28]{Grunpaper}, any arrangement equiconfigurational with $\mathcal C$ (i.e. corresponding by a one-to-one bijection of lines) is projectively equivalent either to $\mathcal C$ or $\mathcal C' $,  
by a projective isomorphism that induces exactly the same one-to-one correspondence of lines and points. 
Moreover, this arrangement, or any arrangement equiconfigurational with it, cannot be defined over  $\Q$, that is, cannot  be given by  points with rational coordinates and  lines defined by equations with rational coefficients.  This follows by a direct computation as in the proof of \cite[Theorem 2.28]{Grunpaper}, or from its statement, because the cross-ratio of the four lines at the point $I$ (or the cross-ratio of the four points on the line $AB$) is irrational.  

\begin{example}\label{exmp:rational}
Consider the points of the arrangement $\mathcal C'$ as points of $\C$ as follows: 
$I=0$, $K=1$ and $A,B,C,D$ correspond to the primitive fifth roots of unity (i.e. we  consider the picture with the $x$-axis directed downwards). Let $G$ be the Galois group of the field $\F$, the extension of $\Q$ by the real coordinates of the fifth primitive roots of unity.  The extension has $[\F:\Q]=4$ and we may take $\sqrt{10+2\sqrt{5}}$ as its primitive element.  The set of fifth roots of unity and their  opposite numbers, i.e. the set of tenth roots of unity, is stable by the action of $G$.  

Let us add to the family of lines of $\mathcal C'$ the 10th line $\overline{JK}$.  
Denote by $l_i(x,y)= x - b_iy -a_i$, $i=1,2,. . . , 9$, the equations of the 9 lines in the arrangement $\mathcal C'$ and take 
$l(x,y)= y$ as the equation of the 10th one. The group $G$ acts on these lines by conjugating their coefficients.  Since the set of points defining these lines, and hence the set of the lines, is $G$ invariant, we conclude that the equation 
$l\prod_{i=1}^{9} l_i(x,y)$ is $G$ invariant, that is,  it is a polynomial with rational coefficients. As $l$ is fixed by $G$, we deduce that $ \prod_{i=1}^{9} l_i(x,y)$, 
a polynomial defining the union of the lines in $\mathcal C'$, has rational coefficients. 
\end{example}

To break the symmetry of the above arrangement 
we add the line $\overline{HE}$ to the arrangement $\mathcal C'$.   
The resulting arrangement consists of 10 lines intersecting at 18 points, including the point at infinity, the intersection of the lines $\overline {AD}$ and $\overline {HE}$.  Denote this arrangement by $\overline {\mathcal C'}$.   

Let $\A'$ be any affine line arrangement equiconfigurational with $\overline {\Car'}$.  
For simplicity of notation we label the points and the lines of $\A'$ by the same letters as the corresponding ones of $\overline {\mathcal C'}$.  
Let $l_i(x,y)$, $i=1, 2,. . . , 9$ be equations of the 9 lines of $\A'$ and  $l_{10}(x,y)$ an equation of the line $\overline{HE}$.  
Consider, similarly to Example \ref{exmp:rational},  the extension $\F$ of $\Q$ by the real coordinates of the points of the arrangement $\A'$ and denote by $G$ its Galois group ($\F$ and $G$ depend of course on $\A'$ and not only on $\overline {\Car'}$). Thus $G$ acts on the points of $\F^2$ by conjugating the coordinates and on the lines in $\F^2$ by conjugating the coefficients of their equations.


\begin{multicols}{2}
\tikzset{%
    add/.style args={#1 and #2}{
        to path={%
 ($(\tikztostart)!-#1!(\tikztotarget)$)--($(\tikztotarget)!-#2!(\tikztostart)$)%
  \tikztonodes},add/.default={.2 and .2}}}

\begin{tikzpicture}
\coordinate (I) at (0,0);
\coordinate (C) at ({cos(342)},{sin(342)});
\coordinate (B) at ({cos(54)},{sin(54)});
\coordinate (A) at ({cos(126)},{sin(126)});
\coordinate (D) at ({cos(198)},{sin(198)});

\draw[name path=AB, add=2 and 2] (A) to (B);
\draw[name path=BC, add=2 and 2] (B) to (C);
\draw[name path=AD, add=2 and 2] (A) to (D);
\draw[name path=AI, add=1 and 3] (A) to (I);
\draw[name path=BI, add=6.3 and 3] (B) to (I);
\draw[name path=CI, add=2 and 3] (C) to (I);
\draw[name path=DI, add=1 and 3] (D) to (I);
\draw[name path=EH, add=1.75 and 0.1] (E) to (H);

\path [name intersections={of=AB and DI,by=E}];
\path [name intersections={of=AB and CI,by=F}];
\path [name intersections={of=BI and AD,by=G}];
\path [name intersections={of=AI and BC,by=H}];
\path [name intersections={of=AD and BC,by=J}];
\path [name intersections={of=BI and EH,by=R1}];
\path [name intersections={of=CI and EH,by=R2}];
\path [name intersections={of=BC and DI,by=d}];
\path [name intersections={of=AD and CI,by=c}];
\draw[name path=EC, add=.25 and 2] (E) to (C);
\draw[name path=FD, add=2 and .25] (D) to (F);

\path [name intersections={of=AI and EC,by=a}];
\path [name intersections={of=BI and FD,by=b}];
\path [name intersections={of=FD and EC,by=K}];

\draw[fill=black] (I) circle (0.09cm) node[above] {$I$};
\draw[fill=red] (C) circle (0.07cm) node[right,xshift=1.5,yshift=1.5] {$C$};
\draw[fill=red] (B) circle (0.07cm) node[above left] {$B$};
\draw[fill=red] (A) circle (0.07cm) node[above right] {$A$};
\draw[fill=red] (D) circle (0.07cm) node[left,xshift=-1.5,yshift=1] {$D$};
\draw[fill=black] (E) circle (0.09cm) node[above left] {$E$};
\draw[fill=red] (F) circle (0.07cm) node[above] {$F$};
\draw[fill=red] (G) circle (0.07cm) node[above,xshift=-3] {$G$};
\draw[fill=black] (H) circle (0.09cm) node[above right,xshift=3] {$H$};
\draw[fill=white] (J) circle (0.05cm) node[right,xshift=3] {$J$};
\draw[fill=white] (R1) circle (0.05cm) node[right,xshift=3] {$R_1$};
\draw[fill=white] (R2) circle (0.05cm) node[above right,xshift=3] {$R_2$};
\draw[fill=white] (d) circle (0.05cm) node[above, xshift=4.5] {$d$};
\draw[fill=white] (c) circle (0.05cm) node[right,xshift=-0.5,yshift=2.5] {$c$};
\draw[fill=white] (b) circle (0.05cm) node[right] {$b$};
\draw[fill=white] (a) circle (0.05cm) node[right] {$a$};
\draw[fill=white] (K) circle (0.05cm) node[below] {$K$};
\end{tikzpicture}

\hglue 2truecm 
\vglue0.2truecm
\hglue 0.9truecm 
$R_\infty =$ intersection \\  \hglue 1.3truecm "at infinity" of the parallel \\
\hglue 1.3truecm  
lines $\overline{AD}$ and $\overline{HE}$\\ \\ \\ \\ 
\vglue2truecm
\hglue 0.9truecm  Arrangement {$\mathcal A' $} \\
\\ 
\end{multicols}

\begin{proposition}
The product $\varphi:=\prod_{i=1}^{10} l_i(x,y)$ is not $G$-invariant.  
\end{proposition}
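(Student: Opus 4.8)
The plan is to strip off the extra line and reduce the statement to a property of $\overline{HE}$ alone. Write $P:=\prod_{i=1}^{9}l_i$ for the product of the nine lines coming from $\Car'$. By the argument of Example~\ref{exmp:rational}, $P$ is $G$-invariant, so $\sigma(P)=P$ for every $\sigma\in G$. Since $\varphi=l_{10}\,P$ we have
\[
\sigma(\varphi)=\sigma(l_{10})\,\sigma(P)=\sigma(l_{10})\,P,
\]
and because the polynomial ring is a domain with $P\neq 0$, the form $\varphi$ is $G$-invariant if and only if $\sigma(l_{10})=l_{10}$ for all $\sigma\in G$, i.e. if and only if the line $\overline{HE}=\{l_{10}=0\}$ is fixed by the whole of $G$. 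Thus it suffices to exhibit a single $\sigma\in G$ with $\sigma(\overline{HE})\neq\overline{HE}$.

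To produce such a $\sigma$ I would compute the $G$-action explicitly in the coordinates of Example~\ref{exmp:rational}, with $I=0$, $K=1$ and $A,B,C,D$ the primitive fifth roots of unity. Conjugation of coordinates fixes $I$ and $K$ and permutes $A,B,C,D$ in a $4$-cycle induced by the Galois action $\zeta\mapsto\zeta^{2}$. As $G$ preserves all incidences, from $H=\overline{AI}\cap\overline{BC}$ and $E=\overline{AB}\cap\overline{DI}$ it carries the two points defining $\overline{HE}$ to
\[
\sigma(H)=\overline{CI}\cap\overline{BD},\qquad \sigma(E)=\overline{CD}\cap\overline{AI},
\]
so that $\sigma(\overline{HE})=\overline{\sigma(H)\,\sigma(E)}$. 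It then remains to check, by a short computation with the roots of unity, that $\sigma(H)$ does not lie on the line $\overline{HE}$; since $\sigma(H)\in\sigma(\overline{HE})$ by construction, this alone forces $\sigma(\overline{HE})\neq\overline{HE}$, and the reduction of the first paragraph completes the proof.

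The delicate point is the effectiveness of the symmetry-breaking: one must be sure that $\overline{HE}$ — added precisely to destroy the symmetry of $\Car'$ — is not $G$-stable merely by accident. Conceptually it cannot be, because $\overline{HE}$ joins two points lying in distinct $G$-orbits of the configuration, so no $\sigma\in G$ can fix it while permuting the nine symmetric lines of $\Car'$. This viewpoint is also what allows the passage from the special regular realization to an arbitrary $\A'$ equiconfigurational with $\overline{\Car'}$, where the explicit coordinate computation is unavailable: if $\varphi$ were $G$-invariant, every $\sigma\in G$ would stabilize the whole ten-line arrangement, and since the addition of $\overline{HE}$ leaves $\overline{\Car'}$ with no nontrivial label-preserving symmetry (Grünbaum's rigidity, \cite[Theorem 2.28]{Grunpaper}), each $\sigma$ would fix every line and hence every intersection point of the configuration; the labelled points would then have $G$-fixed, hence rational, coordinates, making the cross-ratio of the four lines through $I$ rational and contradicting its irrationality. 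I would present the explicit computation as the main, self-contained proof and use this invariant-theoretic argument to cover the general case.
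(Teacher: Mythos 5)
There is a genuine gap, and it sits exactly where the proposition is hardest. The statement concerns an \emph{arbitrary} affine arrangement $\A'$ equiconfigurational with $\overline{\Car'}$, with the field $\F$ and the group $G$ built from the coordinates of that particular $\A'$. Your main reduction --- ``$P=\prod_{i=1}^{9}l_i$ is $G$-invariant by the argument of Example~\ref{exmp:rational}, hence $\varphi$ is $G$-invariant if and only if $l_{10}$ is fixed'' --- is only available for the special pentagonal realization, because Example~\ref{exmp:rational} relies on the defining points being tenth roots of unity, a set visibly stable under the Galois action. For a general $\A'$ there is no reason for the nine-line product to be $G$-invariant (its coefficients need not be rational), so the reduction, and with it your explicit computation, at best proves the proposition for one realization; since $G$ itself changes with $\A'$, this does not imply the general statement. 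Moreover, even in that special case the decisive verification that $\sigma(H)\notin\overline{HE}$ is announced but never carried out.

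Your third paragraph, which is the only part addressing the statement in its actual generality, follows the paper's strategy (assume invariance, deduce that every line is individually fixed by $G$, conclude rationality of all points, contradict Gr\"unbaum), but its key step is unjustified: from ``each $\sigma\in G$ permutes the ten lines'' you pass to ``each $\sigma$ fixes every line'' by invoking ``Gr\"unbaum's rigidity (Theorem 2.28)''. That theorem asserts that equiconfigurational arrangements are projectively equivalent by an isomorphism inducing the given bijection; it is not a statement that the ten-line configuration admits no nontrivial incidence-preserving self-bijection, and that triviality genuinely needs proof --- recall that the nine-line configuration $\Car'$ \emph{does} have nontrivial combinatorial automorphisms (the pentagonal symmetries), so it is not automatic that adding $\overline{HE}$ kills them all. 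Supplying this missing combinatorial argument is precisely the content of the paper's proof: it classifies the intersection points by weight (number of lines through them), characterizes $\overline{FE}$, $\overline{HE}$, $\overline{AD}$ as the unique lines having no points of weight $2$, $3$, $4$ respectively, uses that an element of $G$ cannot move the intersection point at infinity to an affine point, and then bootstraps through the incidence structure until all ten lines and all eighteen points are shown to be fixed, whence all coordinates are rational and \cite{Grunpaper} is contradicted. Without that analysis, or some substitute for it, your general-case argument is an assertion rather than a proof.
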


\begin{proof} 
Suppose for contradiction that the union of 10 lines is $G$-invariant.  Then so is the union of 18 intersection points.  The point at infinity cannot be moved to an affine point by an element of $G$.  For the other points and for all the lines we show that they are $G$-invariant as follows.
\begin{enumerate}
\item
To an affine point $P$ of $\mathcal A'$ we associate its weight, i.e. the number of lines intersecting at $P$.  There are 3 possibilities: 
\begin{enumerate}
\item
4 lines intersect at $I$, $H$, $E$;  these points are marked in black.
\item
3 lines intersect at $A$, $B$, $C$, $D$, $F$, $G$;  these points are marked in red.
\item
2 lines intersect at $a$, $b$, $c$, $d$, $K$, $J$, $R_1$, $R_2$;  these points are marked in white.
\end{enumerate}
The group $G$ preserves each of these three sets of points. 
\item
The following lines are stable by the action of $G$:  $\overline{FE}$ as the only line without points of  
weight 2,  $\overline{HE}$ as the only line without points of weight 3, $\overline{AD}$ as the only line without points of weight 4.
\item
The intersection points of these lines, $A$, $E$ (and $R_\infty$), are stable by $G$.
\item
Each of the points of weight 4, $I$, $H$, $E$, is stable by $G $ (the line $\overline{HE}$ and $E$ are both stable).  
\item
Lines containing two stable points are stable.  Thus the lines $\overline{IE}$ and 
$\overline{IH}$ are stable.  Hence the intersection points of these lines with the other stable lines are stable by $G$.  Therefore $D$, $A$, and $H$ are stable. 
\item
All points of the line $\overline{AD}$ are stable ($J$ is of weight 2 and $G$ is of weight $3$).
\item
All points of the line $\overline{HE}$ are stable ($R_1$ is on a stable line $\overline{IG}$).
\item
Every line of the arrangement, different from $\overline{AD}$ and  $\overline{HE}$,  joins a point of $\overline{AD}$ and a point of $\overline{HE}$.  Therefore, by (6) and (7), all lines are stable by $G$. 
\end{enumerate}
Hence all points of $\mathcal A'$ have rational coefficients. Consequently,  all points of  $\mathcal C'$ have rational coefficients.  This contradicts the conclusion of \cite[Theorem 2.28 and the discussion on page 34]{Grunpaper}.  
\end{proof}

\begin{remark}
Let $\mathcal B$ be a projective line arrangement equiconfigurational with $\A'$.  Then $\mathcal B$ is projectively isomorphic to $\A'$ or to  the arrangement $\A$ obtained from $\Car$ by adding the line $\overline{HE}$.  Indeed, suppose that $\mathcal B$ and $\A'$ are equiconfigurational by a bijection of lines denoted by $\Phi$. Let 
$\tilde {\mathcal B}$ denote the arrangement obtained from $\mathcal B$ by removing the line $\Phi^{-1}(\overline{HE})$.  Then $\tilde {\mathcal B}$ is equiconfigurational to $\Car'$, hence, by \cite[Theorem 2.28]{Grunpaper}, projectively isomorphic to 
either $\Car'$ or $\Car$.  On these 9 lines this isomorphism induces exactly the same bijection as $\Phi$ and therefore it sends the line $\overline{HE}$ to the corresponding line.  
\end{remark}

\section{Proof of Teissier's result}\label{sec:proof}

Let $l'_i (x,y) -a_i=0$, $i=1,2,. . . , 9$, $l'_i$ being a linear form, be the equations of the 9 lines defining the arrangement $\mathcal C'$ of the previous section.  Then $f_0(x,y,z):= \prod_i \tilde l_i(x,y,z)$, where $\tilde l_i(x,y,z)=: l_i'(x,y) -a_i z$, defines the cone over the union of these lines. Consider this cone as a complex singularity in $\C^3$. In \cite{Te90}  Teissier claims the following result.   

\begin{proposition} [\cite{Te90}]\label{pr:teissier}
Let $f(x,y)=0, \text{with}\,\, x\in \C^3, y\in \C^p$, be an analytic deformation of $f_0$ that is Whitney equisingular along $\{0\}\times \C^p$.  Then the family of tangent cones to the fibres $y\to X_y:=\{(x,y); f(x,y)=0\}$ is also Whitney equisingular.  

As a corollary,  one finds that in Teissier's example, each of these tangent cones $C_{X_y,y}$ is the union of $9$ planes that is equivalent to $X_0$ by a projective transformation, hence cannot be defined over $\Q$.  
\end{proposition}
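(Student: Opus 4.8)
The plan is to reduce the statement to a study of the deformation of the total space $X=\{f=0\}$ to its tangent cone along the section $\sigma=\{0\}\times\C^p$, and then to read off the equisingularity of the special fibre. First I would form the fibrewise deformation to the normal cone
\[
\mathcal{X}=\overline{\{(t\cdot x,\,y,\,t)\ :\ (x,y)\in X,\ t\in\C^*\}}\ \subset\ \C^3\times\C^p\times\C,
\]
where $t\cdot x$ is the scaling action on the $\C^3$ factor, together with its projection $\pi\colon\mathcal{X}\to\C^p\times\C$. For $t\neq0$ the fibre $\pi^{-1}(y,t)=t\cdot X_y$ is isomorphic to $X_y$, while $\pi^{-1}(y,0)=C_{X_y,y}$ is the tangent cone; that the special fibre is exactly the fibrewise tangent cone uses that Whitney equisingularity of $X$ forces the multiplicity of $X_y$ at the origin to be constant, so the specialisation commutes with restriction to the fibres over $\C^p$. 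The scaling action of $\C^*$ gives an isomorphism $\mathcal{X}|_{t\neq0}\cong X\times\C^*$ over the base, so the hypothesis immediately yields that $\mathcal{X}$ is Whitney equisingular over $\C^p\times\C^*$ along $\sigma\times\C^*$.

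The heart of the argument, and the step I expect to be hardest, is to propagate Whitney equisingularity of $\mathcal{X}$ across the hyperplane $\{t=0\}$, i.e.\ to the points $(0,y,0)$. I would run this through Teissier's Criterion: Whitney equisingularity of $\pi$ along $\sigma\times\C$ is equivalent to the independence of $(y,t)$ of the local polar multiplicities (the $\mu^{*}$-sequence) of the fibres at the origin. Write $F(y,t)=\mu^{*}(\pi^{-1}(y,t),0)$ for the resulting upper semicontinuous function on the base. By the previous paragraph $F\equiv\mu^{*}(X_0,0)=:\mu^{*}_0$ on $\{t\neq0\}$, and specialising $t\colon 1\to0$ can only increase $\mu^{*}$, so $F(y,0)\ge\mu^{*}_0$ for all $y$. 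Since $X_0$ is already a cone we have $C_{X_0,0}=X_0$, hence $F(0,0)=\mu^{*}_0$; combined with the pointwise bound this forces the generic value of $F$ on $\{t=0\}$ to equal $\mu^{*}_0$, so the tangent-cone family can fail to be equisingular at most along a proper analytic subset $Z\subset\{t=0\}$. Ruling out such a jump is precisely the phenomenon of \emph{exceptional limiting tangents} on the projectivised tangent cone, and is the point at which the theorem of \cite{LT79} is false as stated. I would close it with the corrected deformation result of Section \ref{sec:deformation}, which shows that the Whitney stratification of $X$ induces, via deformation to the normal cone, a Whitney stratification of $\mathcal{X}$ compatible with $\pi$ over all of $\C^p\times\C$; restricting this stratified family to $\{t=0\}$ shows $Z=\emptyset$ and that $y\mapsto C_{X_y,y}$ is Whitney equisingular along $\sigma$.

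For the corollary I would argue as follows. A Whitney equisingular family of cones in $\C^3$ is the same datum as an equisingular family of degree-$9$ plane curves in $\proj^2$, namely the projectivised tangent cones; since the central member is the union of the $9$ lines underlying $X_0$, and equisingularity preserves the degree together with the combinatorial type of the singular points, every member is again a union of $9$ lines with the same incidences, i.e.\ an arrangement equiconfigurational with that of $X_0$. By \cite[Theorem 2.28]{Grunpaper} every such arrangement is projectively equivalent to $\Car$ or to $\Car'$, so $C_{X_y,y}$ is projectively equivalent to $X_0$. Finally, because the cross-ratio of the four lines through $I$ is irrational, neither $\Car$ nor $\Car'$ can be defined over $\Q$, and hence no $C_{X_y,y}$ can. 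In particular, were some $X_{y}$ defined by a power series with rational coefficients, its tangent cone would be cut out by the corresponding rational initial forms and so definable over $\Q$, contradicting the above; this is exactly Teissier's conclusion.
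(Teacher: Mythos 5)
Your setup---the fibrewise deformation to the normal cone $\mathcal{X}$, equimultiplicity to identify the special fibre, and the reduction of everything over $\{t\neq 0\}$ to the hypothesis---matches the paper's starting point (it is essentially the family $F(x,y,t)=t^{-d}f(tx,y)$ of Lemma \ref{lem:observation}). But the step you yourself single out as the heart of the argument contains a genuine gap, in fact two. First, you close the argument by invoking a ``corrected deformation result of Section \ref{sec:deformation}''; no such result exists. Section \ref{sec:deformation} is devoted to a \emph{counterexample}: the surface $z(zx-y^2+x^3)=0$ has no exceptional tangents, yet its deformation to the tangent cone is not Whitney (nor even topologically) equisingular---the cuspidal singular locus specialises to a double line and splits. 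This is precisely why Th\'eor\`eme (2.1.1) of \cite{LT79} is false as stated, so the result you want to cite is the very thing that has to be proved. Second, the semicontinuity argument you use to isolate the bad set is not available: the fibres are cones over line arrangements and have one-dimensional singular loci, so the $\mu^{*}$-sequence is undefined, and the Teissier/Brian\c con--Speder criterion (constancy of $\mu^{*}$ equivalent to Whitney) is a theorem about families of \emph{isolated} hypersurface singularities. For non-isolated fibres, Whitney equisingularity of the family requires controlling the intermediate stratum---the family of singular lines of the cones---and that is exactly where the counterexample fails and where the real work lies. The paper does this work in the Main Theorem \ref{thm:main}, exploiting the homogeneity of the central fibre in an essential way: \cite[Th\'eor\`eme 2.1.1]{LT88} and equidimensionality over $Y$ kill the exceptional cones (claims \emph{i)}, \emph{ii)}, \emph{v)}); a non-isolated version of Henry--L\^e's theorem (Lemma \ref{lem:nonisolates}, Corollary \ref{cor:nonisolHL}) together with Hironaka's equimultiplicity identifies $(C_{X,Y})_{\sing}$ with $C_{\Sigma,Y}$ (claims \emph{iii)}, \emph{iv)}); and strict Thom ($w_f$) gradient estimates (Lemmas \ref{lem:localinequality} and \ref{lem:observation}) give Whitney's conditions between the regular part of the cone and its singular lines (claim \emph{vi)}).

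Your proof of the corollary reproduces exactly the error in \cite{Te90} that Section \ref{sec:grunbaum} corrects. From ``$C_{X_y,y}$ is projectively equivalent to $X_0$, and the configuration cannot be given rational coordinates'' you conclude that the surface cannot be defined over $\Q$. But ``defined over $\Q$'' means that the defining polynomial---the product of the nine linear forms---has rational coefficients, and Example \ref{exmp:rational} shows that for the arrangement $\Car'$ this product \emph{is} rational: the Galois group permutes the nine lines nontrivially while preserving their union, so irrationality of the individual lines does not obstruct rationality of the product. For the same reason your last sentence (``its tangent cone would be cut out by the corresponding rational initial forms and so definable over $\Q$, contradicting the above'') proves nothing. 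This is why the example must be corrected by adjoining the line $\overline{HE}$, and why the conclusion requires the combinatorial Galois-stability Proposition of Section \ref{sec:grunbaum}---showing that every line and every affine intersection point of the enlarged arrangement is individually Galois-stable, forcing rational coordinates and contradicting Gr\"unbaum's theorem---rather than the cross-ratio argument alone.
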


Besides the fact that $\mathcal C'$ has to be replaced by the arrangement $\overline {\mathcal C'}$, 
Teissier's proof of the first claim uses 
Théorème (2.1.1) 
of \cite{LT79} which is incorrect as stated.  We give below an alternative proof of this first claim, for the arrangement $\overline {\mathcal C'}$ or any arrangement of lines, that avoids using Théorème (2.1.1) of \cite{LT79}; see Theorem \ref{thm:main}.  In Section \ref{sec:deformation} we provide a counterexample to 
Théorème (2.1.1) of \cite{LT79}.

Consider a family of surface singularities of constant multiplicity $d$, 
\begin{align*}
f(x,y)=f_d(x,y)+ f_{d+1}(x,y) +  \cdots  , \quad f(0,y)\equiv 0,
\end{align*}
 where $x=(x_1,x_2,x_3)\in(
\mathbb C^3,0), y\in (\C^p,0),$ and the $f_i$ are homogeneous in $x$ of degree $i$. 
 Let us write $X_y:=\{(x,y); f(x,y)=0\}$ for $y\in \C^p$. 

We denote $X=\{f=0\}, \, \Sigma=X_{\sing}$, $Y=\{x=0\}$, and the regular part of $X$ by $X_{\reg}$.
Let $C_{X,Y}$ denote the normal cone of $X$ along $Y$, that is  $C_{X,Y} = f_d^{-1} (0) $.  
Similarly,  $C_{\Sigma,Y}$  will represent the normal cone of $\Sigma$ along $Y$. 

\subsection{Exceptional tangents}
First we recall the notion of exceptional tangents of a surface singularity $S\subset (\C^3,0)$.  It was introduced in \cite{HL75} 
where the following result was shown; see also the survey paper \cite{LeSn21}.

\begin{proposition}[{\cite[Théorème 3.1]{HL75}}]\label{prop:HL}
Let $S\subset (\C^3,0)$ be an isolated surface singularity.  Then the set of the limits of planes tangent to $S$ at $0$ is the union of the set of hyperplanes tangent to the tangent cone $C_{S,0}$ and the set of hyperplanes of a finite number of line-pencils whose axes are lines of $C_{S,0}$, called the exceptional tangents. 

Moreover, among these lines we find all the lines of the singular set of the reduced tangent cone of $S$ at $0$.
\end{proposition}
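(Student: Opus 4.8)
The plan is to identify the set of limits of tangent planes with the fibre over the origin of the conormal (Gauss) modification and to analyse it along arcs, using a single blow-up only as a bookkeeping device. Write $f=f_d+f_{d+1}+\cdots$ with $f_d\not\equiv 0$ of degree $d=\mult_0 f$, so that the tangent cone is $C_{S,0}=\{f_d=0\}$, and let $L\subset(\proj^2)^*$ be the set of limits at $0$ of the tangent planes $\ker\grad f(p)$, $p\in S_{\reg}$. Thus $L$ is the fibre over $0$ of the projection $\kappa\colon C(S)\to S$, where $C(S)$ is the closure in $S\times(\proj^2)^*$ of $\{(p,[\grad f(p)]):p\in S_{\reg}\}$; since $C(S)$ is $2$-dimensional and $\kappa$ is an isomorphism over $S_{\reg}$, the fibre $L$ has dimension at most $1$. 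By the curve selection lemma every $H\in L$ is attained along a Puiseux arc $p(t)\in S_{\reg}$, $p(t)\to 0$, which after reparametrisation I write as $p(t)=t^{a}v+o(t^{a})$ with $v\neq 0$. Two identities will be used repeatedly: inserting $p(t)$ into $f(p(t))\equiv 0$ forces the leading coefficient $f_d(v)$ to vanish, so the direction $[v]$ lies on $\proj(C_{S,0})$; and differentiating $f(p(t))\equiv 0$ gives $\grad f(p(t))\cdot p'(t)\equiv 0$, whence, since $p'(t)$ has leading direction $v$, the limit plane $H$ contains the line $\C v$. In particular every limit plane contains a line of the tangent cone, its \emph{axis}.

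The first step is the behaviour over smooth points of the cone. If $[v]$ is a smooth point of the reduced curve $\proj(C_{S,0})$ I claim the limit $H$ is forced to be the hyperplane tangent to $C_{S,0}$ at $[v]$, independently of the arc. When $f_d$ is reduced near $v$ this is immediate: $\grad f(p(t))=t^{a(d-1)}\grad f_d(v)+(\text{higher order})$ and $\grad f_d(v)\neq 0$, so $H=[\grad f_d(v)]$. When $f_d$ is non-reduced one argues instead by comparing the orders of vanishing in $t$ of the three components of $\grad f(p(t))$: the component transverse to the reduced cone vanishes to strictly lower order than the tangential ones (for $f=x^2+y^3+z^3$, whose cone is the double plane $\{x=0\}$, every such arc yields $H=\{x=0\}$). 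Conversely every tangent hyperplane of $C_{S,0}$ is realised by lifting a curve through a smooth cone point into $S_{\reg}$. Hence the ``cone'' part of $L$ is exactly the set of tangent hyperplanes of $C_{S,0}$, i.e.\ the projective dual of $\proj(C_{S,0})$.

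The decisive observation for the exceptional part is then the following dichotomy. Suppose $H\in L$ is \emph{not} a tangent hyperplane of the cone. If some arc realising $H$ had leading direction $v$ with $[v]$ a smooth point of $\proj(C_{S,0})$, the previous step would force $H$ to be the cone tangent there, a contradiction; therefore \emph{every} arc realising $H$ has leading direction in the singular locus of the reduced cone. Now the reduced tangent cone is a reduced surface, so the reduced curve $\proj(C_{S,0})$ has only finitely many singular points, i.e.\ the singular locus of the reduced cone is a finite union of lines $\ell_1,\dots,\ell_r$. Consequently the axis of such an exceptional $H$ is one of the $\ell_j$, and by the containment $v\in H$ of the first paragraph, $H$ lies in the pencil $\Pi_j$ of planes through $\ell_j$. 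This already yields the stated shape of $L$: the cone tangents together with (the limits inside) finitely many pencils whose axes $\ell_1,\dots,\ell_r$ are lines of $C_{S,0}$, and these axes are among the singular lines of the reduced cone.

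It remains to prove the two sharper assertions, and this is where I expect the real work to lie. The first is the ``moreover'': that \emph{every} singular line of the reduced tangent cone actually occurs as an axis. The point is that over a singular point $[v]$ of $\proj(C_{S,0})$ the leading gradient $\grad f_d(v)$ vanishes, so the limit plane is no longer rigid: using the several branches of the cone through $[v]$ one must construct arcs in $S_{\reg}$ whose tangent planes converge to a plane of $\Pi_{j}$ that is not a cone tangent, thereby exhibiting $\ell_j$ as a genuine exceptional axis. The second, finer, claim is that each exceptional contribution fills the \emph{whole} pencil $\Pi_j$ rather than a proper subset; in the model $f=xy+z^{3}$ the arcs $p(t)=(\alpha t^{3/2},\beta t^{3/2},t)$ with $\alpha\beta=-1$ give the limits $[\beta:\alpha:0]$, which sweep the entire pencil of planes through the $z$-axis as $\alpha$ varies. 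Both assertions reduce to showing that the map sending arcs through $[v]$ to their limit planes dominates the pencil $\Pi_j$, which is precisely the local computation—a Newton-polygon analysis of the surviving lowest-order terms of $\grad f(p(t))$—that forms the main obstacle; by contrast the finiteness of the axes and the containment in pencils obtained above are soft consequences of the dimension of $L$ and of Euler's relation.
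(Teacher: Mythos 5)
There is a genuine error here, and separately an admitted gap, so the proposal does not prove the proposition. (Note first that the paper does not reprove this statement at all: it quotes \cite[Théorème 3.1]{HL75} and records that it can be derived from Teissier's criterion \cite[Remarque 1.6]{Te73}, so your argument has to stand on its own.) The error is in your ``first step'': the claim that over a \emph{smooth} point of the reduced projectivized cone every limit tangent plane equals the tangent plane of the cone. This is correct when $f_d$ is reduced along the relevant component (your leading-term computation), but it is false when the cone is non-reduced, and your own parenthetical example refutes it. For $f=x^2+y^3+z^3$ take the arcs $z=t$, $y=-t+ct^2$, and $x(t)$ a square root of $-(y^3+z^3)=-3ct^4(1+O(t))$, i.e. $x(t)=\pm\sqrt{-3c}\,t^2(1+O(t))$; these lie in $S_{\reg}$, their leading direction $[0:-1:1]$ is a smooth point of the reduced cone $\{x=0\}$, and yet $\grad f=(2x,3y^2,3z^2)$ has limits $[\pm 2\sqrt{-3c}:3:3]$, which as $c$ varies sweep out the \emph{entire} pencil of planes through the line $\{x=0,\ y+z=0\}$, not just $\{x=0\}$. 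Consequently your ``decisive dichotomy'' --- that every exceptional limit has its axis in the singular locus of the reduced cone --- is false: this example has three exceptional lines $\{x=0,\ y=-\zeta z\}$, $\zeta^3=1$, all through smooth points of the reduced cone, whose singular locus is empty. This is consistent with the proposition, which only asserts that the axes are lines of $C_{S,0}$ and that the singular lines of the reduced cone occur \emph{among} them; your argument purports to prove the reverse inclusion, which fails. So the structural containment in your proof collapses precisely in the non-reduced case, which the proposition must cover.

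Second, you explicitly leave unproven the two assertions that constitute the actual content of Henry--Lê's theorem: that the exceptional part of the limit set consists of finitely many \emph{full} pencils, and the ``moreover'', that every singular line of the reduced tangent cone occurs as an axis. Announcing these as ``the real work'' and gesturing at a Newton-polygon analysis is not a proof; in particular, your soft dimension count ($\dim L\le 1$) does not by itself show that a component of $L$ that is not a dual curve of the cone must be a line of $\check\proj^2$, i.e. a pencil --- that requires the conormal/biduality structure (as in \cite[Théorème 2.1.1]{LT88}) or an argument via Teissier's criterion, namely that $H$ fails to be a limit of tangent planes if and only if $\mu(H\cap S)$ is minimal among hyperplane sections, from which both the pencil structure and the ``moreover'' follow by analysing when this Milnor number jumps. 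As it stands, your text establishes only that $\dim L\le 1$, that every limit plane contains a line of the cone, that the dual of the cone lies in $L$, and that nothing else occurs over points where the cone is reduced and its projectivization smooth; that is well short of the proposition.
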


This proposition follows from a theorem of Teissier \cite[Remarque 1.6]{Te73} that describes the limit of tangent hyperplanes to an isolated hypersurface singularity $(S,0)\subset (\C^n,0)$, of arbitrary dimension, in terms of the following criterion:

\begin{xTC}
A hyperplane $H$ is not a limit of tangent hyperplanes if and only if  $H\cap S$ is an isolated singularity with  the minimum Milnor number among all the Milnor numbers of the intersections of $S$ with hyperplanes.
\end{xTC}

An analog of the main statement of Proposition \ref{prop:HL} in the non-isolated case was showed by  Lê  \cite{Le81}.  We will also need  a version of the last part of Proposition \ref{prop:HL} which is valid for non-isolated singularities.  This we show in Proposition \ref{cor:nonisolHL} below. 

 \begin{lemma}\label{lem:nonisolates}
Let $S\subset (\C^n,0)$ be a hypersurface, $S=f^{-1} (0)$ with $f$ reduced, and let $\ell$ be a line through the origin in $\C^n$ that is not tangent to $S_{\sing}$.  Then, for a generic linear form $\varphi: \C^ n\to \C$ and for $N$ sufficiently large,   
$g(x):= f(x) + \varphi^N(x)$ satisfies 
\begin{enumerate}[label=\roman*)] 
\item  
$(g^{-1} (0))_{\sing}= S_{\sing} \cap \varphi^{-1} (0)$; 
\item 
for a sufficiently small conical open neighbourhood $U\subset \C^n$ of $\ell^*:= \ell \setminus \{0\}$,  
the limits as $x\to 0$, $x\in U$,  of hyperplanes tangent at $x$ to the levels of $f$ and those to the levels of $g$  coincide.  
\end{enumerate}
 \end{lemma}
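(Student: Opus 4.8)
The plan is to treat the two assertions by two complementary forms of the \L{}ojasiewicz inequality, after recording the elementary fact that, as germs at the origin, the critical set $\{\grad f=0\}$ coincides with $S_{\sing}$. Indeed, any holomorphic arc $\gamma(t)\to 0$ contained in $\{\grad f=0\}$ satisfies $(f\circ\gamma)'=\langle\grad f\circ\gamma,\gamma'\rangle\equiv 0$, so $f\circ\gamma\equiv f(0)=0$; by the curve selection lemma this forces $\{\grad f=0\}\subseteq\{f=0\}$ near $0$, whence $\{\grad f=0\}=S_{\sing}$ as germs. Throughout write $a:=\grad\varphi$, a nonzero constant covector, so that $\grad g=\grad f+N\varphi^{N-1}a$ and $g=f+\varphi^{N}$; the only property of $\varphi$ we really use is $a\neq 0$ (genericity will serve merely to make $\{\varphi=0\}$ transverse to $S_{\sing}$, which is convenient in the intended application but not needed for (i)--(ii)).

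For assertion (i), the inclusion $S_{\sing}\cap\varphi^{-1}(0)\subseteq(g^{-1}(0))_{\sing}$ is immediate: if $\varphi(x)=0$ and $x\in S_{\sing}$ then, since $N\ge 2$, both $g(x)=f(x)=0$ and $\grad g(x)=\grad f(x)=0$. For the reverse inclusion I would take $x$ near $0$ with $g(x)=0$ and $\grad g(x)=0$, which reads
$$f(x)=-\varphi(x)^{N},\qquad \grad f(x)=-N\varphi(x)^{N-1}a.$$
Feeding these into the holomorphic \L{}ojasiewicz gradient inequality $\|\grad f(x)\|\ge c\,|f(x)|^{\theta}$ (valid near $0$ for some $c>0$, $\theta\in(0,1)$) gives $N\|a\|\,|\varphi(x)|^{N-1}\ge c\,|\varphi(x)|^{N\theta}$. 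If $\varphi(x)\neq 0$ this becomes $N\|a\|\,|\varphi(x)|^{N(1-\theta)-1}\ge c$; choosing $N$ so large that $N(1-\theta)-1>0$ and letting $x\to 0$ makes the left-hand side tend to $0$, a contradiction. Hence $\varphi(x)=0$, and then $\grad f(x)=0=f(x)$, i.e. $x\in S_{\sing}\cap\varphi^{-1}(0)$. A dimension count on the resulting critical locus, of dimension $\le n-2$, also shows $g$ is reduced near $0$, so its singular locus is indeed this critical set.

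For assertion (ii), I first exploit the tangency hypothesis. Because $\ell\not\subseteq C_{S_{\sing},0}$ and the (projectivised) tangent cone is closed, after shrinking the conical neighbourhood $U$ of $\ell^{*}$ there is $c_{1}>0$ with $\operatorname{dist}(x,S_{\sing})\ge c_{1}|x|$ for all $x\in U$; otherwise a sequence $x_{k}\in U$ with $\operatorname{dist}(x_{k},S_{\sing})=o(|x_{k}|)$ would yield nearest points $s_{k}\in S_{\sing}$ with $s_{k}/|s_{k}|$ accumulating at a direction of $C_{S_{\sing},0}$ arbitrarily close to $\ell$, contradicting the non-tangency once $U$ is narrow. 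Combining this with the \L{}ojasiewicz inequality for the analytic set $\{\grad f=0\}=S_{\sing}$, namely $\|\grad f(x)\|\ge c_{2}\,\operatorname{dist}(x,S_{\sing})^{m}$, yields $\|\grad f(x)\|\gtrsim|x|^{m}$ on $U$. On the other hand $\|N\varphi(x)^{N-1}a\|\lesssim|x|^{N-1}$ because $\varphi$ is linear. Therefore, if $N>m+1$, the perturbation $\grad g-\grad f=N\varphi^{N-1}a$ is $o(\|\grad f\|)$ as $x\to 0$ in $U$; in particular $\grad f$ and $\grad g$ are nonzero on $U\setminus\{0\}$ near $0$, and
$$\left\|\frac{\grad g(x)}{\|\grad g(x)\|}-\frac{\grad f(x)}{\|\grad f(x)\|}\right\|\longrightarrow 0\quad(x\to 0,\ x\in U).$$
Since the tangent hyperplane to the level of $f$ (resp. $g$) through a regular point $x$ is the point $[\grad f(x)]$ (resp. $[\grad g(x)]$) of the dual projective space $\check{\proj}^{\,n-1}$, the two families have the same accumulation set as $x\to 0$ in $U$, which is (ii).

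A single $N$ works for both parts by taking it larger than $\max\{1/(1-\theta),\,m+1\}$. I expect the main technical point to be the distance estimate $\operatorname{dist}(x,S_{\sing})\gtrsim|x|$ on $U$ extracted from the non-tangency of $\ell$, together with pinning down the \L{}ojasiewicz exponents $\theta$ and $m$ well enough to verify that the chosen $N$ simultaneously defeats $|f|^{\theta}$ in (i) and $|x|^{m}$ in (ii); the projective-limit comparison itself is then a routine consequence of the $o(\|\grad f\|)$ bound.
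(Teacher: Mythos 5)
Your proof is correct, and it takes a genuinely different route from the paper's. The paper's argument is organized around a generic relative polar curve: coordinates are chosen so that $\{\partial f/\partial x_1=\cdots=\partial f/\partial x_{n-1}=0\}=\Gamma\cup S_{\sing}$ with $\Gamma$ a curve, the linear form is taken to be $\varphi=x_n$ with $\{x_n=0\}$ meeting $\Gamma\cup\ell$ only at the origin, and the conical neighbourhood $U$ is required to satisfy $\overline U\cap\bigl(\{x_n=0\}\cup C_{S_{\sing},0}\bigr)=\{0\}$, so that the {\L}ojasiewicz inequality takes the form $\|\grad f\|\ge c|x_n|^{N-2}$ on $U$; part i) then follows because adding $x_n^N$ changes only $\partial f/\partial x_n$, which forces any new critical point onto $\Gamma\cup S_{\sing}$, and part ii) follows from the resulting estimate $\bigl|\grad f/\|\grad f\|-\grad g/\|\grad g\|\bigr|\le C|x_n|$. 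You avoid both the polar curve and the coordinate choice: for i) you use the gradient inequality $\|\grad f\|\ge c|f|^\theta$, $\theta\in(0,1)$, and for ii) the regular-separation inequality $\|\grad f\|\ge c\operatorname{dist}(x,S_{\sing})^m$ combined with $\operatorname{dist}(x,S_{\sing})\gtrsim|x|$ on a narrow conical $U$, which is exactly where non-tangency of $\ell$ enters; the endgame is the same normalized-gradient comparison, with $o(1)$ in place of the paper's rate $O(|x_n|)$ (either suffices). Your route buys something real: i) and ii) hold for \emph{every} nonzero linear form, with $U$ chosen independently of $\varphi$, so genericity is only needed downstream, in Corollary \ref{cor:nonisolHL}, to get $\varphi^{-1}(0)\cap S_{\sing}=\{0\}$ and hence an isolated singularity for $g$ --- a point you correctly isolate; moreover your threshold is essentially sharp (for $f=x^2-y^3$, $\varphi=y$, optimal $\theta=2/3$, the value $N=3$ produces the non-reduced $g=x^2$, and $N(1-\theta)>1$ excludes precisely $N\le 3$). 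What the paper's route buys is that it stays inside the polar-variety framework used throughout, with a single exponent $N$ serving both in the {\L}ojasiewicz bound and as the power of $\varphi$. Your two supporting observations --- that $\{\grad f=0\}=S_{\sing}$ as germs at $0$ (curve selection), and that the dimension bound on the critical locus forces $g$ to be reduced --- are both needed and correctly argued.
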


 We say  that a subset of $\C^n$ is \emph{conical} at the origin if it is the intersection of a $\C^*$-homogeneous set, with respect to  the standard action of $\C^*$ on $\C^n$: $\C^*\times \C^n \ni (s,x)\to sx \in \C^n $,  and an open neighbourhood of the origin. 

  \begin{proof}[Proof of Lemma \ref{lem:nonisolates}] 
Let the system of coordinates $x=(x_1, \ldots , x_n)$ be such that  
\begin{align}\label{eqn:polar}
\left \{ \frac{\partial f}{\partial x_1}= \frac{\partial f}{\partial x_2}
= \cdots =\frac{\partial f}{\partial x_{n-1} }=0 \right \} = \Gamma \cup 
S_{\sing}, 
\end{align} 
with $\Gamma$ being of dimension $1$ (or empty).  Then $\Gamma$, a generic relative polar curve,  depends only on the projection $(x_1, \ldots , x_{n-1}) : \C^n \to \C^{n-1}$.  Therefore, after changing $x_n$ if necessary, we may assume moreover that $\{x_n=0\} \cap (\Gamma \cup \ell) = \{0\}$ and   
denote $H:=\{x_n=0\}$.  
Let $U$ be a conical neighbourhood of $\ell^*$ such that  $\overline U \cap (H\cup C_{S_{\sing},0}) = \{0\}$, $0\notin U$.  Then, by the {\L}ojasiewicz inequality, there exist a positive integer $N$ and a constant $c>0$ such that 
\begin{align}\label{eqn:loj}
\|\grad f (x)\| \ge c |x_n|^{N-2} \quad \text { for } x\in U.
\end{align}

Let $g(x):= f(x) + x_n^N$.  Then 
\begin{align}\label{eqn:gandf}
\frac{\partial g}{\partial x_i} = \frac{\partial f}{\partial x_i} \text { for } i= 1, \ldots  ,n-1 
\text { and } \frac{\partial g}{\partial x_n} = \frac{\partial f}{\partial x_n} + Nx_n^{N-1}. 
\end{align}
Therefore, because $H\cap \Gamma = \{0\}$, we have $(g^{-1} (0))_{\sing}= S_{\sing} \cap \varphi^{-1} (0)$ as needed.  By \eqref{eqn:loj} and \eqref{eqn:gandf}, $g$ on $U$ satisfies the {\L}ojasiewicz inequality similar to \eqref{eqn:loj} 
 and therefore 
$$
\left | \frac{\grad f}{\|\grad f\|} -\frac{\grad g}{\|\grad g\|} \right | \le C |x_n| .
$$
Then $\varphi (x) = x_n$ and $N$ given by \eqref{eqn:loj} guarantee  the conclusions of the lemma. 
 \end{proof}

 \begin{corollary}\label{cor:nonisolHL}
Let $S\subset (\C^3,0)$ be a surface singularity. Suppose that a line $\ell$ through the origin in $\C^3$ is not tangent to $S_{\sing}$ but it is included in the singular set of the reduced tangent cone of $S$ at $0$.  
Then the planes containing $\ell$ form a pencil of limits of tangent planes to $S_{\reg}$, that is, these planes are exceptional tangents.  
\end{corollary}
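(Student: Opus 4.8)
The plan is to deform $S$ to an \emph{isolated} surface singularity to which Proposition \ref{prop:HL} applies, and then to transport the resulting exceptional tangents back to $S$ by means of Lemma \ref{lem:nonisolates}. First I would apply Lemma \ref{lem:nonisolates} to $S=f^{-1}(0)$ (with $f$ reduced) and to the given line $\ell$, which by hypothesis is not tangent to $S_{\sing}$. Since $S$ is a reduced hypersurface in $(\C^3,0)$, its singular locus $S_{\sing}$ is a curve (or a point, or empty); choosing the generic linear form $\varphi=x_n$ of the Lemma so that in addition $\varphi^{-1}(0)$ meets $S_{\sing}$ only at $0$, part (i) gives that $g:=f+\varphi^N$ satisfies $(g^{-1}(0))_{\sing}=S_{\sing}\cap\varphi^{-1}(0)=\{0\}$, i.e. $T:=g^{-1}(0)$ is an isolated surface singularity. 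Taking $N$ large, in particular $N>d:=\mult_0 f$, the lowest degree forms of $f$ and $g$ coincide, so $C_{T,0}=C_{S,0}$ and the two reduced tangent cones are equal; in particular $\ell$ lies in the singular set of the reduced tangent cone of $T$ at $0$.

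Next I would apply the last assertion of Proposition \ref{prop:HL} to the isolated singularity $T$: since $\ell$ is a line of the singular set of the reduced tangent cone of $T$, it is an exceptional tangent, so every plane $H\supset\ell$ is a limit of tangent planes to $T_{\reg}$. Writing such a limit as $H=\lim_k T_{x_k}T$ with $x_k\in T_{\reg}$, $x_k\to 0$, the fact that the axis of the pencil is $\ell$ forces the secants $[x_k]$ to tend to $[\ell]$; hence $x_k$ lies in the conical neighbourhood $U$ of $\ell^*$ provided by Lemma \ref{lem:nonisolates} for all large $k$. Since $T_{x_k}T=(\grad g(x_k))^{\perp}$, part (ii) of the Lemma, applied to this very sequence in $U$, yields $(\grad f(x_k))^{\perp}\to H$ as well.

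It remains to realize $H$ as a limit of tangent planes to $S_{\reg}$ rather than to the levels of $f$, and this is the delicate point. The planes $(\grad f(x_k))^{\perp}$ are tangent to the level $f^{-1}(f(x_k))$, not to $S$; but on $T$ one has $f(x_k)=-\varphi(x_k)^N=-x_n(x_k)^N$, so by the {\L}ojasiewicz bound \eqref{eqn:loj} one gets $|f(x_k)|/\|\grad f(x_k)\|\le c^{-1}|x_n(x_k)|^2\le c^{-1}\|x_k\|^2=o(\|x_k\|)$. Consequently $f^{-1}(0)$ passes within distance $o(\|x_k\|)$ of $x_k$, and a Newton correction produces a point $x'_k\in f^{-1}(0)=S$ with $\|x'_k-x_k\|=o(\|x_k\|)$; then $x'_k\to 0$ still along $U$, and since the Lemma chooses $U$ with $\overline U\cap C_{S_{\sing},0}=\{0\}$, the point $x'_k$ is regular, $x'_k\in S_{\reg}$. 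Because the displacement is negligible compared with $\|x_k\|$, the normalized gradient is essentially unchanged and $T_{x'_k}S=(\grad f(x'_k))^{\perp}\to H$. Letting $H$ range over the pencil through $\ell$ shows that these planes are limits of tangent planes to $S_{\reg}$, i.e. exceptional tangents of $S$.

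The main obstacle is precisely this last transfer. Limits of tangent planes genuinely depend on which level of $f$ one approaches along, so one cannot naively identify the limits for $f$ and for $g$ on their respective zero fibres. The argument succeeds only because on $g^{-1}(0)$ the function $f$ is of very high order ($=-\varphi^N$), so that the {\L}ojasiewicz lower bound on $\|\grad f\|$ valid in $U$ forces the correction onto $S$ to be of higher order than $\|x_k\|$; verifying that this correction does not alter the limiting plane is the one estimate to be carried out with care, and it is of the same nature as the normalized-gradient estimate already established in the proof of Lemma \ref{lem:nonisolates}.
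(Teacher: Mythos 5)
Your proposal has the same skeleton as the paper's proof: perturb to $g=f+\varphi^N$ via Lemma \ref{lem:nonisolates}, apply Proposition \ref{prop:HL} to the isolated singularity $\tilde S=g^{-1}(0)$, which has the same tangent cone, realize the pencil along sequences lying in the conical set $U$, and transfer the limits back to $f$. But two of your steps have genuine gaps.

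First, the claim that ``the fact that the axis of the pencil is $\ell$ forces the secants $[x_k]$ to tend to $[\ell]$'' is unjustified, and as a statement about an arbitrary sequence realizing $H$ it is false: a plane $H\supset\ell$ may also arise as a limit of tangent planes along sequences whose secants converge to a different direction, for instance if $H$ happens to be tangent to $C_{\tilde S,0}$ along another of its lines, or lies in a second pencil. What is actually needed is weaker but still nontrivial: that each $H$ in the pencil is realized by \emph{some} sequence with secants tending to $[\ell]$. This is precisely where the paper invokes \cite[Th\'eor\`eme 2.1.1]{LT88}: the set of limits of pairs (secant, tangent plane) is a finite union of dual correspondences, i.e.\ projectivized conormals of subvarieties of $\proj^2$, and the only dual correspondence whose image in $\check\proj^2$ can contain the one-parameter family $\Pi$ is the conormal $\{[\ell]\}\times\Pi$ of the point $[\ell]$ (a non-linear dual curve cannot contain a line, and the dual of a line is a point). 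Without this structure theorem, or an equivalent argument, you cannot place your realizing sequences inside $U$, and the transfer mechanism that follows is unavailable.

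Second, the Newton-correction step. You correctly isolate the level-versus-zero-fibre issue --- Lemma \ref{lem:nonisolates}(ii) compares tangents to \emph{levels} of $f$ and of $g$, while the conclusion concerns tangents to $S_{\reg}$ itself, a point the paper passes over quite briskly --- but your resolution is not sound as stated. The variation of $[\grad f]$ over a displacement $\delta$ is of order $\|D^2f\|\,\delta/\|\grad f\|$; with your bounds $\delta\lesssim\|x_k\|^2$ and $\|D^2 f\|\lesssim\|x_k\|^{d-2}$, concluding that the normalized gradient is ``essentially unchanged'' requires $\|x_k\|^{d}=o(\|\grad f(x_k)\|)$. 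This is exactly what you do not have: the hypothesis that $\ell$ lies in the singular set of the reduced tangent cone forces $\grad f_d$ to vanish identically on $\ell$, so on a cone around $\ell$ one only has a {\L}ojasiewicz bound $\|\grad f(x)\|\ge c\|x\|^{\alpha}$ with $\alpha\ge d$, and the required smallness fails in general. In particular the estimate is not ``of the same nature'' as the one in the Lemma, which compares $\grad f$ and $\grad g$ at the \emph{same} point, where the difference is explicitly $Nx_n^{N-1}$ and hence small relative to $\|\grad f\|$. Your step can be repaired, but only by bringing in the sharp exponent $\alpha$ and choosing $N$ large compared to it (e.g.\ $N>2\alpha+2-d$, which gives $\delta\lesssim\|x_k\|^{N-\alpha}$ and a Gauss-map variation $\lesssim\|x_k\|^{N+d-2-2\alpha}\to 0$); nothing in your write-up ties the choice of $N$ to this requirement.
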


\begin{proof}
Let $S=f^{-1} (0)$, with $f$ reduced, and suppose $\dim S_{\sing} =1$.  Assume that $g(x):=f(x) + \varphi^N(x)$ satisfies the conclusion of the Lemma \ref{lem:nonisolates}, and moreover, by (i) of this lemma, we may choose $\varphi$  such that  $\tilde S= g^{-1}(0)$ has isolated singularity at the origin.  We may also choose $N$ sufficiently large so that the multiplicities at the origin satisfy $\mult_0 f= \mult_0 g <N$.  Then, the normal cones $C_{S,0}$ and $C_{\tilde S, 0 }$ are equal and given by the same equation.  

By Proposition \ref{prop:HL}, the planes containing $\ell$ form a pencil $\Pi$ of limits of tangent planes to $\tilde S_{reg}$.  To see that they form a pencil of limits of tangent planes to $S_{reg},$ it is sufficient to note that $\Pi$ can be obtained by limits of tangent planes to $\tilde S_{\reg}$ on sequences of points $x\to 0$ with the secant lines $\overline {x0} \to \ell$, that is, the points from the set $U$ of the conclusion (ii) of Lemma  \ref{lem:nonisolates}.  The latter claim comes from the description of the common limits of secants and tangent hyperplanes to any local singularity, as the union of dual correspondences, i.e. subsets of  $\mathbb P^{n-1}\times \check{\mathbb P}^{n-1}$, $\mathbb P^{2}\times \check{\mathbb P}^{2}$ in our case,  that are projectivised conormal spaces to algebraic subsets of $\mathbb P^{n-1}$ (or to $\check{\mathbb P^{n-1}}$ by duality).  This follows, for instance, from \cite[Théorème 2.1.1]{LT88}, in the case where the small stratum $Y$ is just the origin.  
\end{proof}

\subsection{Main Theorem}

\begin{mainthm}\label{thm:main}
Let $f(x,y)$ with $x\in \C^3, y\in {\C}^p,$ be a reduced analytic function germ of multiplicity $d$ at the origin.  Denote $X=f^{-1}(0)$, $\Sigma = X_{\sing}$, $Y= \{0\}\times \C^p$.  
Assume that $f(x,0)= f_d(x,0)$ and that $(X_{\reg}, \Sigma\setminus Y,Y)$ is a Whitney stratification of $X$.  
Then the following hold:
\begin{enumerate}[label=\roman*)] 
\item  
For all $y$, $X_y= \{x\in \C^3; f(x,y)=0\}$ has no exceptional tangents at $x=0$;
\item 
$((C_{X,Y})_{\reg}, Y)$ satisfies Whitney's conditions;
\item  
$(C_{X,Y})_{\sing}=C_{\Sigma,Y}$;
\item
$C_{\Sigma,Y} \setminus Y$ is smooth;
\item 
$(C_{\Sigma,Y} \setminus Y, Y)$ satisfies Whitney's conditions;
\item $((C_{X,Y})_{\reg}, C_{\Sigma,Y}\setminus Y, Y)$
is a Whitney stratification of the normal cone  $C_{X,Y}$.
\end{enumerate}
\end{mainthm}

\begin{proof} 
{\bf Step 1.} We use \cite[Théorème 2.1.1]{LT88} to show \emph{i)}, \emph{ii)}, and \emph{v)}, the latter under the assumtpion that \emph{iii)} and \emph{iv)} hold. (It is possible to reduce to the case $\dim Y=1$ using the argument of the proof of \cite[Proposition 1.2.1]{Te82}, but this is not necessary in our case.)  

By the assumption that $(X_{\reg},Y)$ satisfies Whitney's conditions, the family of  limits of secants and tangent hyperplanes to the fibers over $Y$
is equidimensional over $Y$. It contains the (projective) conormal to the tangent cone, that is, 
\begin{align*}
\overline{\{(x,H,y)\in \C^3 \times \check \proj^2 \times {\C}^p ;(x,y)\in (C_{X,Y})_{\reg}, H=T_x((C_{X_y,y})_{\reg})\}} , 
\end{align*} 
and the conormals to the subcones of $C_{X,Y}$, denoted by $Y(V_\alpha)$ in \cite{LT88} and  called the exceptional cones (see {Définition} 2.1.4 ibid).  By Whitney's conditions the conormal spaces to the exceptional cones are also equidimensional over $Y$, and therefore by the homogeneity assumption for $y=0$,  i.e. $X_0= C_{X_0,0}$, they are empty.   This shows {that for all $y,$ 
 $X_y$ has} no exceptional tangents, which is \emph{i)}.

Then, the reciprocal argument (see \cite[Théorème 2.1.1]{LT88}) for $V_{\alpha}=C_{X,Y}$ shows that the pair of strata $((C_{X,Y})_{\reg}, Y)$ satisfies Whitney's conditions, proving \emph{ii)}.
(This also follows by an elementary computation: see Lemma \ref{lem:observation}
below.) 

A similar argument shows \emph{v)}, provided that \emph{iii)} and \emph{iv)}  hold. 
Because $(\Sigma\setminus Y,Y)$ satisfies Whitney's conditions, by \cite[Theorem 2.1.1]{LT88} the set
$$
W=\overline{\{([x],H,y);x\in (C_{\Sigma,Y})_{\reg}, T_x((C_{\Sigma,Y})_{\reg})\subset H}\} 
\subset \mathbb P^2\times \check{\mathbb P}^2\times \C^p
$$
 is equidimensional over $Y$,
and $((C_{\Sigma,Y})_{\reg}, Y)$ satisfies Whitney's conditions.

\medskip

\noindent {\bf Step 2.} 
We show the third and the fourth claims of Theorem \ref{thm:main}. 

By definition, for an ideal $I$ the normal cone of $V(I)$ along $Y$ is  
$C_{V(I),Y}=V(I_{in})$, where $I_{in}:=(g_{in}, g\in I)$.  Here $g_{in}$ denotes the homogeneous initial form of $g$ with reespect to the variable $x$.  
Thus $C_{X,Y} = V(f_d)$ and 
$$
(C_{X,Y})_{\sing}=V((\frac{\partial f_d}{\partial x_1}, \frac{\partial f_d}{\partial x_2}, ..., \frac{\partial f_d}{\partial x_n}, \frac{\partial f_d}{\partial y_1}, \ldots  , \frac{\partial f_d}{\partial y_p}))
. $$
The singular locus of $X$ is defined as $\Sigma =V(J_{f})$, where 
$$
J_{f}=(\frac{\partial f}{\partial x_1}, \frac{\partial f}{\partial x_2}, ..., \frac{\partial f}{\partial x_n}, \frac{\partial f}{\partial y_1}, \ldots , \frac{\partial f}{\partial y_p})).
$$
 Its normal cone along $Y$ is $C_{\Sigma,Y}=V((J_f)_{in})$.   
Clearly $\frac{\partial f_d}{\partial x_i}$ is either zero or the initial part of 
$\frac{\partial f}{\partial x_i}$, and similarly for $\frac{\partial f_d}{\partial y_j}$.  
This shows the inclusion $C_{\Sigma,Y}\subset (C_{X,Y})_{\sing}$.

Note that $X_0$ is reduced because so is $X$, and both the regular and the singular part of $X $ are Whitney equisingular along $Y$.  Therefore, since $X_0$ is reduced and homogeneous,  its singular part $\Sigma_0$  is a finite union of lines in $C_{X_0,0}$.  Then, by \cite{Hi70}, because $(\Sigma \setminus Y, Y)$ satisfies Whitney's conditions, $\Sigma$ is normally flat (i.e. equimultiple) along $Y$, and $\Sigma$ is topologically trivial along $Y$.  This shows that $\Sigma_0= \Sigma\cap \{y=0\}$ and that $\Sigma$, with reduced structure, is a finite union of mutually transverse smooth subvarieties intersecting along $Y$.  Therefore $C_{\Sigma,Y}$  is a finite union of smooth families of lines intersecting only along $Y$. 
In particular, $C_{\Sigma,Y}\setminus Y$ is smooth,  and this shows \emph{iv)}. 

As shown before, $C_{\Sigma,Y} \subset (C_{X,Y})_{\sing}$, and hence,  for every $y$,  
$$ 
(C_{\Sigma,Y})_y \subset ((C_{X,Y})_{\sing})_y \subset (C_{X_{y},y})_{\sing} . 
$$
 By Corollary \ref{cor:nonisolHL},   
  $(C_{X_{y},y})_{\sing}\setminus C_{\Sigma_y,y}$ is contained in the set of exceptional tangents, empty in our case. This shows that $(C_{X,Y})_{\sing}=C_{\Sigma,Y},$ i.e.
the third claim of Theorem \ref{thm:main}.   This, as explained before, completes the proof of \emph{v)}.

\bigskip
\noindent
{\bf Step 3.}
To complete the proof of Theorem \ref{thm:main} we show that 
\begin{align}\label{eqn:lastWhitney}
((C_{X,Y})_{\reg}, C_{\Sigma,Y} \setminus Y)
\end{align} 
satisfies Whitney's conditions.  In the proof we will use the standard 
action of $\C^*$ on $\C^3$.

Let $\ell$  be a line in $(C_{X_0,0})_{\sing}$,  say given by $x_1=x_2=0$.
Denote by $L$ the component of $C_{\Sigma,Y}$  containing it and
let $N=\{x_3=c\}$  be a transverse slice of $\ell$.  Then,  to show that  \eqref{eqn:lastWhitney}
 is Whitney, it is equivalent to show that 
 \begin{align}\label{eqn:lastWhitneyslice}
(N\cap (C_{X,Y})_{\reg}, N\cap C_{\Sigma,Y} \setminus Y)
\end{align} 
is Whitney. Indeed it follows from the fact that $\ell^* = \ell \setminus \{0\}$ is an orbit of the $\C^*$ action.  

Note that $N\cap C_{X,Y}$  is a family of plane
curve singularities along $N\cap L$.  Therefore, to show that it
is Whitney it suffices to show that this family is $\mu$  constant (see \cite{LeRa76} or \cite{Te76})  
or equivalently that 
 \begin{align}\label{eqn:muconstant}
\left |\frac{\partial f_d}{\partial y} \right| \leq C \left| \frac{\partial f_d}{\partial x_1} , \frac{\partial f_d}{\partial x_2} \right|
\end{align} 
in a neighbourhood of $\ell\cap N$ in $N$.

Note also that \eqref{eqn:muconstant} has to hold in a whole neighbourhood of $\ell$ 
in $N,$ not merely on the zero set of $f_d$. Therefore to show it we consider the strict Thom stratifications of $f$ and of $f_d$.  A strict Thom stratification of $f$ is a Whitney stratification of $X$ satisfying the strict Thom condition $w_f$ along each stratum (see \cite{HMS84}).  In particular, along the smallest stratum $Y$ this condition reads
\begin{align}\label{eqn:thomforf}
\left |\frac{\partial f}{\partial y}\right |\leq C|x|\left|\frac{\partial f}{\partial x}\right|.
\end{align}
By \cite{Pa93} or \cite{BMM94}, every Whitney stratification of $X$ satisfies the $w_f$ condition along every stratum.  In particular, we may assume that  \eqref{eqn:thomforf} holds. 

Recall that we say that a subset of $\C^3$ is \emph{conical} if it is the intersection of a $\C^*$-homogeneous set and an open neighbourhood of the origin.  

\begin{lemma}\label{lem:localinequality}
There is an open conical neighbourhood $U$ of $\ell^*:= \ell\setminus \{0\}$ in $\C^3$  such that 
\begin{align}\label{eqn:prepa}
\left|\frac{\partial f}{\partial x_3}\right|\leq C \left|\frac{\partial f}{\partial x_1}, \frac{\partial f}{\partial x_2} \right| 
\end{align}
on $U\times \C $.  
\end{lemma}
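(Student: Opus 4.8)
The plan is to argue by contradiction, converting the desired {\L}ojasiewicz inequality \eqref{eqn:prepa} into a statement about limits of tangent hyperplanes to the level sets of $f$, and then to contradict it using the Thom condition. Suppose \eqref{eqn:prepa} fails on every conical neighbourhood of $\ell^*$. Note that wherever $(\partial f/\partial x_1,\partial f/\partial x_2)=0$ the failure forces $\partial f/\partial x_3\neq 0$, so the failure points carry $\grad_x f\neq 0$. Hence the Curve Selection Lemma produces a real-analytic arc $t\mapsto(x(t),y(t))$ tending to a point $(x_0,0)$ with $x_0\in\overline{\ell^*}=\ell$, along which $\grad_x f\neq 0$ and
\[
\frac{\left|\left(\partial f/\partial x_1,\ \partial f/\partial x_2\right)\right|}{\left|\partial f/\partial x_3\right|}\longrightarrow 0 .
\]
Consequently $\grad_x f/\|\grad_x f\|\to(0,0,1)$, so the limiting tangent hyperplane (in the $x$-directions) to the levels of $f$ along the arc is the \emph{horizontal} hyperplane $\{u_3=0\}$, whose conormal is $e_3$ and which does \emph{not} contain the direction of $\ell$.

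Next I bring in the Thom condition. By \eqref{eqn:thomforf} we have $|\partial f/\partial y|\le C|x|\,\|\grad_x f\|$, so $\grad_y f/\|\grad_x f\|$ stays bounded and, after passing to a subarc, converges to some $\eta\in\C^p$; thus the limiting conormal in $\check{\proj}^{\,2+p}$ is represented by $(e_3,\eta)$ and the limiting tangent hyperplane to the levels of $f$ is $\tilde H_0=\{(u,v):u_3+\langle\eta,v\rangle=0\}$. On the other hand, our Whitney stratification of $X$ satisfies the strict Thom condition $w_f$ (hence $a_f$) along every stratum, by \cite{Pa93} or \cite{BMM94}. When $x_0\in\ell^*$ the limit point lies in the stratum $\Sigma\setminus Y$, whose component through $\ell^*$ is $L$; as established in Step~2, near $\ell^*$ we have $L\cap\{y=0\}=\ell$, so $\ell\times\{0\}\subset L$ and therefore $(e_3,0)\in T_{(x_0,0)}L$. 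The $a_f$ condition gives $T_{(x_0,0)}L\subset\tilde H_0$, hence $(e_3,0)\in\tilde H_0$, i.e. $1=0$ — a contradiction. This proves \eqref{eqn:prepa} on a conical neighbourhood of $\ell^*$ away from the origin.

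I expect the genuine obstacle to be the case $x_0=0$, where the arc runs into the origin along $\ell$: there $\grad_x f_d$ degenerates (it vanishes identically on $\ell$ because $\ell\subset(C_{X_0,0})_{\sing}$) and $L$ may be singular, so the tangent-space argument does not apply verbatim. To handle it I would exploit the conical structure of the special fibre: since $X_0=C_{X_0,0}$ is homogeneous, Euler's relation shows that the tangent plane to $(X_0)_{\reg}$ at any smooth point contains the ray through that point, so every limit of tangent planes to $X_0$ taken along secants converging to $\ell$ contains $\ell$ — this is exactly the mechanism behind Proposition \ref{prop:HL} and Corollary \ref{cor:nonisolHL}. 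The strict Thom condition for \emph{both} $f$ and its leading form $f_d$ then transfers this from the special fibre to all nearby levels of $f$ and all nearby $y$, forcing $\tilde H_0\supset\ell$ and again contradicting $\tilde H_0=\{u_3=0\}$; matching the limiting conormals of $f$ and of $f_d$ along $\ell$ \emph{uniformly} as one approaches the origin is the delicate technical heart. An alternative for this regime is to prove \eqref{eqn:prepa} first for the homogeneous $f_d$ by restricting to the transverse slice $\{x_3=1\}$, where, via Euler's identity, it reduces to the integral-closure inequality $|\gamma|\le C\,|(\partial\gamma/\partial x_1,\partial\gamma/\partial x_2)|$ for the reduced plane-curve germ $\gamma=f_d(x_1,x_2,1)$ with isolated singularity, and then to absorb the higher-order terms through the deformation to the normal cone; the care required here is precisely that $(\partial f_d/\partial x_1,\partial f_d/\partial x_2)$ vanishes along $\ell$, so the perturbation estimate must be carried out with the correct {\L}ojasiewicz exponent.
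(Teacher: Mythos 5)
Your argument for failure points accumulating at a point $x_0\in\ell^*$ is essentially correct (the condition $a_f$ along $\Sigma\setminus Y$, valid by \cite{Pa93} or \cite{BMM94}, plus $\ell^*\times\{0\}\subset\Sigma\setminus Y$ from Step 2, does kill such sequences), but it is logically superfluous, and the case you defer is the \emph{entire} content of the Lemma. A conical neighbourhood is by definition the intersection of a $\C^*$-homogeneous set with a neighbourhood of the origin, so negating the Lemma one may shrink the radius together with the angle and the constant: the negation always produces failure points $(x_n,y_n)$ with $x_n\to 0$ conically along $\ell$ and $y_n\to 0$. Thus everything rests on the case $x_0=0$, which you explicitly leave as ``the delicate technical heart'' with two uncompleted sketches. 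Both sketches are also problematic as stated. The Euler-relation mechanism controls tangent planes to $X_0$ at points \emph{of} $X_0$, whereas \eqref{eqn:prepa} must hold at ambient points: at a point with $f_d(x)=c\ne 0$, Euler's identity gives $\langle x,\grad f_d(x)\rangle=dc\ne 0$, so the radial direction is \emph{not} in the tangent plane to the level, and making it asymptotically tangent requires a further {\L}ojasiewicz-type estimate. (For $y=0$ this is repairable, and your slice reduction to $|\gamma|\le C|\grad \gamma|$ is in substance the paper's own first step: since $f(x,0)=f_d(x,0)$, both sides of \eqref{eqn:prepa} are homogeneous of degree $d-1$ in $x$, so a local estimate near $(0,0,1)$ spreads conically.)

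The genuinely missing step is the passage from $y=0$ to $y\ne 0$, and it cannot be an unspecified ``transfer'' or a soft perturbation absorbing higher-order terms, because it must use the Whitney hypothesis on $(X_{\reg},\Sigma\setminus Y,Y)$ in an essential way. Indeed, take $f=x_1(x_1x_3-x_2^2)+y\,x_1x_3^3$ (the example of Section \ref{sec:deformation} with coordinates relabelled and $t$ renamed $y$): here $f(x,0)=f_3(x,0)$ is homogeneous and reduced, $\ell=\{x_1=x_2=0\}$ lies in $(C_{X_0,0})_{\sing}$, and \eqref{eqn:prepa} holds conically at $y=0$; yet along the arc $x_2=0$, $x_1=x_3^4$, $y=-2x_3^2$, which tends to $0$ conically along $\ell$, one computes $\partial f/\partial x_1=\partial f/\partial x_2=0$ while $\partial f/\partial x_3=-5x_3^8\ne 0$, so the conclusion of the Lemma fails --- precisely because the Whitney hypothesis (and with it \eqref{eqn:thomforf}) fails there. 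The paper closes this step with a specific tool your proposal lacks: by \cite{Pa93} or \cite{BMM94} the assumed Whitney stratification satisfies $w_f$, and by \cite{HMS84} (Proposition 3.3.1, Th\'eor\`eme 6.1, Remarque 6.2.1) the exceptional divisor $E$ of the blow-up of the ideal $(x_1,x_2,x_3)(\partial f/\partial x_1,\partial f/\partial x_2,\partial f/\partial x_3)$ is then equidimensional over $Y$. The conclusion of the Lemma is exactly the statement that $([0:0:1],[0:0:1])\notin E$ over $y=0$, established by the homogeneity argument, and this condition then persists for $y$ close to $0$. Without this ingredient (or an equivalent equisingularity argument making uniform use of the Whitney hypothesis near the origin), your proof does not go through.
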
 

\begin{proof}
 First note that, for $y=0$, \eqref{eqn:prepa} holds on an open conical neighbourhood of $\ell^*$ in 
 $\C^3$. Indeed, it follows from the homogeneity of $f_d$ that there is a Whitney stratification of $X_0$ with $\ell^*$ as a stratum, and both sides of \eqref{eqn:prepa} are homogeneous of the same degree.  

To show that \eqref{eqn:prepa} holds also for $y\ne 0$ and small, maybe with a slightly different constant $C$, we may use the fact that if 
a stratification satisfies  the $w_f$ condition then the exceptional divisor $E\subset {\mathbb P}^2\times \check{\mathbb P}^2\times \mathbb C$ of the blowing-up of the ideal $(x_1,x_2,x_3)(\frac{\partial f}{\partial x_1}, \frac{\partial f}{\partial x_2},  \frac{\partial f}{\partial x_3})$  
 is equidimensional, see  \cite[Proposition 3.3.1, Théorème 6.1, and Remarque 6.2.1]{HMS84}. Recall that the line $\ell$ is given by $x_1=x_2=0$.  Thus the conclusion of Lemma \ref{lem:localinequality} means that 
  the point $([0:0:1],[0:0:1]) \notin E$ for $y=0$.  But, if this is the case for $y=0$, then it holds as well for $y$ close to $0$.  
 \end{proof}

 \begin{lemma}\label{lem:observation}
Let  
\begin{align*}
F(x,y,t)=t^{-d}f(tx,y) = f_d(x,y) + t f_{d+1}(x,y) + \cdots 
\end{align*}
 be the deformation to the normal cone $C_{X,Y}$ induced by $f$. 
If $f$ satisfies \eqref{eqn:thomforf}, then the induced deformation  to the normal cone satisfies  
\begin{align}\label{eqn:thomforF}
\left|\frac{\partial F}{\partial y}\right|\leq C|x| \left|\frac{\partial F}{\partial x}\right|.
\end{align}
Similarly, if for a conical set $U\subset \C^3$ the inequality  
\begin{align}\label{eqn:thomforf2}
\left|\frac{\partial f}{\partial y}\right|\leq C|x| \left|\frac{\partial f}{\partial x_1}, \frac{\partial f}{\partial x_2} \right| ,
\end{align}
holds on $U$,  then the analogous inequality holds on $U\times\C\times 
\overline D_1$ for $F$,   
\begin{align}\label{eqn:thomforF2}
\left|\frac{\partial F}{\partial y}\right|\leq C|x| \left|\frac{\partial F}{\partial x_1}, \frac{\partial F}{\partial x_2} \right|.
\end{align}
(Here $\overline D_1$ denotes $\{|t|\le 1\}$.)
\end{lemma}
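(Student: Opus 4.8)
The plan is to reduce both inequalities to the hypotheses \eqref{eqn:thomforf} and \eqref{eqn:thomforf2} for $f$ by a direct chain-rule computation, exploiting the homogeneity built into the scaling $F(x,y,t)=t^{-d}f(tx,y)$.

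First I would record the partial derivatives of $F$. Differentiating $F(x,y,t)=t^{-d}f(tx,y)$ for $t\neq 0$ gives
\begin{align*}
\frac{\partial F}{\partial y_j}(x,y,t) = t^{-d}\frac{\partial f}{\partial y_j}(tx,y), \qquad \frac{\partial F}{\partial x_i}(x,y,t) = t^{1-d}\frac{\partial f}{\partial x_i}(tx,y).
\end{align*}
The key point is that $\partial/\partial x_i$ produces one extra factor of $t$ relative to $\partial/\partial y_j$, which is exactly the discrepancy needed to absorb the factor $|x|$ after rescaling.

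For the first inequality \eqref{eqn:thomforF}, I would evaluate the condition \eqref{eqn:thomforf} for $f$ at the point $(tx,y)$, where $|tx|=|t|\,|x|$. Substituting the displayed formulas and cancelling the common power $|t|^{d}$, the factor $|t|$ coming from $|tx|$ combines with the extra power of $t$ in $\partial F/\partial x$ so that both sides scale identically; the inequality then follows for $t\neq 0$. Since $F=f_d+tf_{d+1}+\cdots$ is analytic, all partial derivatives of $F$ are continuous up to $t=0$, so the inequality extends to $t=0$ by continuity.

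For the second inequality \eqref{eqn:thomforF2} the computation is identical, using only $\partial/\partial x_1,\partial/\partial x_2$. The one extra thing to verify is that the hypothesis \eqref{eqn:thomforf2} is available at the substituted point $(tx,y)$, i.e.\ that $tx\in U$ whenever $x\in U$ and $|t|\le 1$. This is where the conical nature of $U$ enters: writing $U=S\cap V$ with $S$ a $\C^*$-homogeneous set and $V$ an open neighbourhood of the origin, I would shrink $V$ to a ball (which does not affect the hypothesis and keeps $U$ conical), so that $|tx|\le|x|$ forces $tx\in V$ while $tx\in S$ by homogeneity. With $tx\in U$ secured, the same cancellation of $|t|^{d}$ yields \eqref{eqn:thomforF2} on $U\times\C\times\overline{D}_1$ for $t\neq 0$, and continuity again fills in $t=0$. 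I do not expect any serious obstacle here: the statement is essentially an exercise in the chain rule combined with the homogeneity of the scaling, and the only genuinely delicate point is the bookkeeping of the conical domain $U$ in the second part, which the definition of conical set handles cleanly after passing to a ball neighbourhood.
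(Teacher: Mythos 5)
Your proof is correct and is essentially the paper's own argument: your chain-rule identities $\frac{\partial F}{\partial y}(x,y,t)=t^{-d}\frac{\partial f}{\partial y}(tx,y)$ and $\frac{\partial F}{\partial x}(x,y,t)=t^{1-d}\frac{\partial f}{\partial x}(tx,y)$ are exactly the $\C^*$-homogeneity relations the paper differentiates (combined with the slice $F(\cdot,\cdot,1)=f$), followed by the same cancellation of the powers of $|t|$ against the factor $|tx|$ and passage to $t=0$ by continuity. Your additional bookkeeping that $tx\in U$ for $|t|\le 1$ (after taking the neighbourhood in the definition of a conical set to be a ball) addresses correctly the one point the paper leaves implicit in ``the proof of \eqref{eqn:thomforF2} is similar.''
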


\begin{proof}
Using the action of $s\in \C^*$ we have 
\begin{align*}
& \frac{\partial F}{\partial x}(sx,y,t)=s^{d-1}\frac{\partial F}{\partial x}(x,y,st), \quad
 \frac{\partial F}{\partial y}(sx,y,t)=s^{d}\frac{\partial F}{\partial y}(x,y,st). 
\end{align*}
Then, by \eqref{eqn:thomforf}, 
 \begin{align*} 
\left|
\frac{\partial F}{\partial y}(x,y,s)\right | & =|s^{-d}|\left |\frac{\partial F}{\partial y}(sx,y,1)\right | \\ & \lesssim |s^{-d}||sx|\left |\frac{\partial F}{\partial x}(sx,y,1)\right | = |x| \left |\frac{\partial F}{\partial x}(x,y,s)|\right|.
\end{align*}
The proof of \eqref{eqn:thomforF2} is similar.  
\end{proof}

Note that \eqref{eqn:thomforf} and \eqref{eqn:prepa} give \eqref{eqn:thomforf2}, and hence, by Lemma \ref{lem:observation}, \eqref{eqn:thomforF2}. The latter gives 
\begin{align*}
\left |\frac{\partial f_d}{\partial y} \right| \leq C |x| \left| \frac{\partial f_d}{\partial x_1} , \frac{\partial f_d}{\partial x_2} \right| ,
\end{align*}
which implies \eqref{eqn:muconstant}.  This shows \emph{vi)} of the Theorem \ref{thm:main}
and completes the proof.  
\end{proof}

\section{When the deformation to the tangent cone is equisingular?}\label{sec:deformation}

Let $X:=\{f(x,y,z)=0\}\subset (\C^3,0)$ be a surface singularity.  Suppose that the tangent cone $C_{X,0}$ of $X$ at the origin is reduced. Let $F(x,y,z,t)$ denote the deformation of $X$ to the tangent cone $C_{X,0}$ and denote $F^{-1}(0)$ by $\frakX $. Then Théorème (2.1.1) of \cite{LT79} claims that 
the following two conditions are equivalent.
\begin{enumerate}
  \item
$X$ has no exceptional tangents.
\item
The deformation of $X$ to the tangent cone $C_{X,0}$ is equisingular.
\end{enumerate}
There are various notions of equisingularity stated in this theorem and claimed equivalent in this case.  One of them, the condition b) of Théorème (2.1.1) of \cite{LT79}, says that the partition of $\frakX$: $(\frakX - \Sigma, \Sigma \setminus \{0\} \times \C, \{0\} \times \C)$, where $\Sigma:=\frakX_{\sing}$, is a Whitney stratification of $\frakX$. 

Suppose that $X$ has no exceptional tangents.
Then $(\frakX - \Sigma, \Sigma \setminus \Sigma_{\sing})$ satisfies Whitney's conditions, i. e. the condition a) of loc. cit.  This part of the proof is correct and was generalized to the hypersurfaces of arbitrary dimensions in \cite{Flo13}; see also \cite{FT18}. 

Both the counterexample below and our proof of the previous section suggests that Théorème (2.1.1) of \cite{LT79} may be true under some additional assumptions.

\begin{xque}
Suppose moreover that the (reduced) singular locus of $X$ is a finite union of smooth curves mutually not tangent.  Is then $(\frakX - \Sigma, \Sigma \setminus \{0\} \times \C, \{0\} \times \C)$ a Whitney stratification of $\frakX$? 
\end{xque}

Note that Théorème (2.1.1) of \cite{LT79} holds true if $X$ is an isolated singularity.  

\subsection{Example of a singularity that is not equisingular to its tangent cone}
Let 
$$f(x,y,z)=z(zx-y^2)+zx^3=z(zx-y^2+x^3), \quad (x,y,z)\in \mathbb C^3.$$
  Then $X=\{f=0\}$ is the union of the plane $X_1=\{z=0\}$ and the Morse singularity (ordinary double point) $X_2=\{zx-y^2+x^3=0\}$ that is analytically isomorphic to $zx-y^2=0$ (by the change $(x,y,z)\to (x,y,\tilde z=z+x^2), zx-y^2+x^3=(z+x^2)x-y^2=\tilde z x-y^2$).  
We note that 
\begin{enumerate}
  \item
neither $X_1$ nor $X_2$ has exceptional tangents and hence neither has $X$;
\item
the singular locus of $X$ is  $X_1\cap X_2=\{z=zx-y^2+x^3=0\}$, i.e. the cusp $y^2=x^3, z=0$.
\end{enumerate}

\bigskip
\noindent
{\bf Deformation to the normal cone.} 
Consider the deformation to the normal cone of $f$
\begin{align}\label{eq:deform}
F(x,y,z,t)=z(zx-y^2)+tzx^3=z(zx-y^2+tx^3).
\end{align}
Let
$\frakX:=\{F=0\}=\frakX_1\cup \frakX_2$, where $\frakX_1=\{z=0\}$ and $\frakX_2=\{zx-y^2+tx^3=0\}$. Then $\frakX$ satisfies the following properties:
\begin{enumerate}
  \item
Both $\frakX_1 $ and $ \frakX_2$ are analytically trivial along $ \{0\}\times \C$.
 \item
$\frakX_{\sing} =\frakX_1\cap \frakX_2=\{z=y^2-tx^3=0\}$ is the deformation of the cusp to a double line.
 \item
  $(\frakX_{reg} ,\{0\}\times \C)$ satisfies Whitney's conditions (because the components are analytically trivial along  $ \{0\}\times \C$).
 \end{enumerate}

 The deformation $t\to \frakX_t$ is not equisingular. The singular locus of $\frakX$, considered as a function of $t$, splits by (2).  It is not topologically trivial either. If $t\to \frakX_t$  were topologically trivial, then the induced deformation on the normal section by $N=\{x=1\}$,  at a singular point of $C_{X,0}$,
\begin{align}\label{eq:deformsliced}
h(y,z,t)=z( z-y^2+t) ,
\end{align} 
would be topologically trivial as a deformation of plane curves, which is not the case.  

Note that the discriminant of $(y,z,t)\to (t, y-bz) $ restricted to the zero set of $h$ equals 
 $(y^2-t)^2$ (up to a non-zero unit). Thus the sum of the multiplicities of its zero set is
independent of $t$ and equals $4$.  Recall after \cite[Chap. II, Prop 1.2]{Te76} the  formula for the multiplicity of discriminant:  $\mult \Delta = \sum_i (\mu_i+(m_i-1))$, where $\mu_i$ denote the Milnor numbers and $m_i$ the multiplicities 
of the singular points of $h$.  Then, for $t=0$, the fiber is $z(z-y^2)=0$, with a single singular point $z=y=0$, its multiplicity and its Milnor number are  $ m=2, \mu =3$, respectively, whilst for $t\ne 0$ and small, $0=z(z-y^2+t)=0$  with two singular points with multiplicity and  Milnor number $m=2, \mu=1$ each. 
 Thus, the sum of the multiplicities  at its zero set is
\begin{itemize}
  \item
for $t=0, \mult \Delta = 3+(2-1)=4$, 
\item
for $t\ne 0, \mult\Delta ^1+ \mult\Delta^2=1+(2-1)+1+(2-1)=4$.
\end{itemize}
Thus, the constancy (with respect to the parameter $t$)  of the sum of the multiplicities of the discriminant does not guarantee the non  splitting of the singular locus, neither the constancy of the sum of the Milnor numbers. 
\newpage

\noindent
{\bf Some other properties:}
\begin{enumerate}
\item
The limits of $\left [\frac{\partial F}{\partial x}:\frac{\partial F}{\partial y}:\frac{\partial F}{\partial z}:\frac{\partial F}{\partial t}\right]_{|\frakX} \to [\eta_1:\eta_2:\eta_3:\eta_4]\in \check {\mathbb P}^3$ at the origin are of the following form.  The last component $\eta_4$ is $0$ and $[\eta_1:\eta_2:\eta_3]$ belongs to the dual of $C_{X_1,0}$, i.e. $[0:0:1]$, or to the dual of $C_{X_2,0}$, i.e. it satisfies $4\eta_1\eta_3-\eta_2^2=0$. 
\item
The relative polar curve 
$ \overline{\left \{\frac {\partial F} {\partial x}=\frac {\partial F} {\partial y}=\frac {\partial F} {\partial z} =0, F\ne 0 \right\}}$ is empty.
\item
The inequality 
$|\frac {\partial F} {\partial t}| \leq C |\frac {\partial F} {\partial x} ,\frac {\partial F} {\partial y} ,\frac {\partial F} {\partial z} |$
fails.
\item 
Every plane of $\check{\mathbb P}^3$ is the limit of the (projectivised) gradient 
$$
 \left [\frac {\partial F} {\partial x} :  \frac {\partial F} {\partial y} :  \frac {\partial F} {\partial z} :  \frac {\partial F} {\partial t} \right ] .$$
That is, $0\times \check{\mathbb P}^3$ is a component of the exceptional divisor of the blowing-up of the jacobian ideal 
$\bigl ( \frac { \partial F} {\partial x} ,\frac {\partial F} {\partial y} , \frac {\partial F} {\partial z}, \frac {\partial F} {\partial t} \bigr )$. 
\end{enumerate} 

\begin {proof}
\begin{enumerate}
\item
For $g=zx-y^2, \eta_1=\frac{\partial g}{\partial x}=z, \eta_2=\frac{\partial g}{\partial y}=-2y, \eta_3=\frac{\partial g}{\partial z}=x$ the limits satisfy the equation of the dual curve $4\eta_1\eta_3-\eta_2^2=4zx-4y^2=0$.
\item  
We show that $\{\frac { \partial F} {\partial x} =\frac {\partial F} {\partial y} = \frac {\partial F} {\partial z}=0\}\subset F=0$.  
If $\frac {\partial F} {\partial y}=-2yz=0$, then either $z=0$ or $y=0$. 
If $z=0$ then $F=0$. 

Suppose $y=0$.  Then $x\frac {\partial F} {\partial x}=z^2x+3tzx^3=0, z\frac {\partial F} {\partial z}=2z^2x+tzx^3=0$.  
This gives $z^2x=0$. If $z=0$ then $F=0$. If $x=0$, then $\frac {\partial F} {\partial x}=z^2=0$ and hence again $z=0$.
\item 
It fails on the curve $y=0, 2z+tx^2=0, z=x^4, t=-2x^2.$
Indeed, 
\begin{align*}
 \left |\frac {\partial F} {\partial x}, \frac {\partial F} {\partial y}, \frac {\partial F} {\partial z} \right | &=|z^2+3tzx^2,0,0|=|x^8-2x^8| \\ 
& = o\left (\left |\frac {\partial F} {\partial t}\right |=|zx^3|=|x^7| \right ). 
\end{align*} 
\item 
On the curve $y=\gamma x^3+ \cdots , z=\alpha_1x^3+\alpha_2x^4+\alpha_3x^5+ \cdots , t=\beta_1x+\beta_2 x^2+\beta_3x^3+ \cdots $, parameterized by $x$,  if we assume $2\alpha_1+\beta_1=2\alpha_2+\beta_2=0$, we have 
\begin{align*}
 \frac {\partial F} {\partial x} & =z^2+3tzx^2=(\alpha_1^2+3\alpha_1\beta_1)x^6+ \cdots =-5\alpha_1^2x^6+ \cdots , \\
 \frac {\partial F} {\partial y} & =-2yz=-2\alpha_1\gamma x^6+ \cdots,\\
   \frac {\partial F} {\partial z} & =2zx -y^2+tx^3  \\ 
& =(2\alpha_1+\beta_1)x^4+ (2\alpha_2  +\beta_2)x^5  +(2\alpha_3+\beta_3-\gamma^2)x^6+ \cdots \\ 
 & =(2\alpha_3+\beta_3-\gamma^2)x^6+ \cdots, \\
  \frac {\partial F} {\partial t} & =zx^3=\alpha_1x^6+ \cdots . 
\end{align*}
Then, $ \left [\frac {\partial F} {\partial x} :  \frac {\partial F} {\partial y} :  \frac {\partial F} {\partial z} :  \frac {\partial F} {\partial t} \right ] \to \left[ -5\alpha_1: -2\gamma : \alpha_1^{-1}(2\alpha_3+\beta_3-\gamma^2) : 1\right]$. \\
Choosing $\alpha_1, \gamma, \alpha_3, \beta_3$ we can get as limits all  points of a dense set of $\mathbb P^3$, and of course, the set of limits is closed.
\end{enumerate}
\end{proof}


\subsection*{Acknowledgements}
The authors would like to thank Bernard Teissier for explaining why and how the original example of his note \cite{Te90} has to be corrected, and Jean-Baptiste Campesato and Jawad Snoussi for help in the preparation of this paper.

\subsection*{Funding}
{The first author is grateful for the support and hospitality of the Sydney Mathematical Research Institute (SMRI). 
The second author  acknowledges support from the Project ‘Singularities and Applications’-CF
132/31.07.2023 funded by the European Union-NextGenerationEU-through Romania’s National
Recovery and Resilience Plan.
}


\normalsize

\end{document}